\documentclass[12pt]{amsart}
\usepackage[T1]{fontenc}
\usepackage{lmodern}
\usepackage{amsmath,amssymb,amsthm,mathtools}
\usepackage[margin=1in]{geometry}
\usepackage{microtype}
\usepackage[unicode,colorlinks=true,linkcolor=black,citecolor=black,urlcolor=blue]{hyperref}
\numberwithin{equation}{section}
\newtheorem{theorem}{Theorem}[section]
\newtheorem{lemma}[theorem]{Lemma}
\newtheorem{proposition}[theorem]{Proposition}
\newtheorem{corollary}[theorem]{Corollary}
\theoremstyle{remark}
\newtheorem{example}[theorem]{Example}
\newtheorem{remark}[theorem]{Remark}
\newcommand{\C}{\mathbb C}
\newcommand{\R}{\mathbb R}
\newcommand{\Nn}{\mathbb N_0}
\newcommand{\cA}{\mathcal A}
\newcommand{\cP}{\mathcal P}
\newcommand{\opRe}{\operatorname{Re}}
\newcommand{\intr}{\operatorname{int}}

\title{A Kallin's lemma for polynomials with exponents in a compact convex set and local approximation}
\author{Kieu Phuong Chi and Duong Thi Thuy Van}
\date{\today}

\address{Kieu Phuong Chi,  \newline
Department of Mathematics and Applications, Saigon  University, 273  An Duong Vuong, Ho Chi Minh City, Vietnam}
\email{\href{mailto:kieuphuongchi@sgu.edu.vn}{kieuphuongchi@sgu.edu.vn}}

\address{Duong Thi Thuy Van,  \newline
Faculty of Natural Science Education, Dong Nai University, Vietnam}
\email{\href{mailto:vandtt@dnpu.edu.vn}{vandtt@dnpu.edu.vn}}
\begin{document}

\subjclass[2020]{46J10, 32E20, 32E30}
\keywords{Kallin's lemma, polynomial convexity, restricted exponents, uniform algebras, approximation on disks.}
\begin{abstract}
Let $S\subset\R_+^n$ be compact and convex with $0\in S$, and let $\cP^S$ be the algebra spanned by monomials with exponents in $\bigcup_{m\ge0}(mS\cap\Nn^n)$. We define the associated polynomial hull and prove a Kallin lemma for the union of two sets convex with respect to this algebra. A weighted approximation lemma then shows that, under a sector or ray condition, two admissible monomials generate exactly the functions of the unrestricted algebra that are constant on a specified zero fibre. Applications include an exact density criterion for mixed monomials, a transfer principle for relative perturbations, and density and hull computations on stars of totally real discs.
\end{abstract}
\maketitle

\section{Introduction}

Let $D$ be a small closed disk centred at $0\in\C$. A classical problem asks when two functions $f,g\in C(D)$ generate $C(D)$ by uniform polynomial approximation. Wermer \cite{wermer1964} proved this for $f=z$ and $g\in C^1$ with $g_{\bar z}(0)\ne0$. Minsker \cite{minsker1976} obtained the exact condition $\gcd(m,n)=1$ for $z^m$ and $\bar z^{\,n}$, and de Paepe \cite{depaepe1986} established a perturbation result in the coprime case under differentiability hypotheses. Without coprimality, higher order terms matter: $z^2$ and $\bar z^2+z^3$ generate $C(D)$ \cite{depaepe1996}, whereas the image of $(z^2,\bar z^2+\bar z^3)$ has a nontrivial polynomial hull \cite{os2002}.

Many positive results use Kallin's lemma to join totally real discs obtained by lifting the problem under a power map; see \cite{depaepe2001,dieudepaepe2002,dieuchi2006,dpw2007,chi2023, chi2026}. In its classical form, the lemma asserts that two polynomially convex compact sets have polynomially convex union if a polynomial maps them into polynomially convex planar sets meeting only at $0$, with $0$ a boundary point of both sets, and the zero fibre on the union is polynomially convex \cite{kallin1966}.

We study the same approximation problem with restricted exponents. Given a compact convex set $S\subset\R_+^n$ containing $0$, let $\cP^S$ be the union of the spaces spanned by monomials with exponents in $mS\cap\Nn^n$. Such spaces occur in the pluripotential theory of convex bodies \cite{bbl2018,msss2024,mss2025,ChiVan2026}. Their union depends only on the cone $\Gamma:=\R_+S$. If this cone misses both coordinate axes away from $0$, neither coordinate monomial is admissible.

Our first result is a hull version of Kallin's lemma for $\cP^S$. If $X_1,X_2$ are $S$-polynomially convex, an admissible polynomial maps them into planar polynomially convex sets meeting only at the boundary point $0$, and its zero fibre on their union is $S$-polynomially convex, then their union is $S$-polynomially convex. Lemma~\ref{lem:kallinHull} also treats relative hulls in a specified ambient set. This distinction matters because restricted polynomials can identify entire coordinate axes, making the ambient hull unbounded even when uniform approximation on the original compact set holds.

We then identify a closed algebra generated by two admissible monomials. In two variables, choose $\beta\in\intr\Gamma\cap\Nn^2$, put $h:=f^{\beta_1}g^{\beta_2}$, and take $N$ large enough that $F:=h^Nf$ and $G:=h^Ng$ are admissible. Under a sector or ray condition on $h(K)$, Theorem~\ref{thm:core} gives
\[
 \overline{\C[F,G]}=\{u\in\overline{\C[f,g]}:u|_{h^{-1}(0)}\text{ is constant}\}.
\]
If $\Gamma$ misses both axes away from $0$, this is also the whole restricted algebra. The proof first recovers $h$ from $h^{N(\beta_1+\beta_2)+1}=F^{\beta_1}G^{\beta_2}$, then removes powers of $h$ by the weighted Kallin lemma, Lemma~\ref{lem:kallin}.

In two variables the extreme slopes $a_S,b_S$ determine the interior of $\Gamma$; boundary membership must also be specified to determine the full algebra. For mixed monomials with opposite angular degrees $p>0$ and $-q<0$, Theorem~\ref{thm:sharp} gives density exactly when $\gcd(p,q)=1$ and $a_S<p/q<b_S$. Corollary~\ref{cor:perturb} identifies restricted and unrestricted algebras for continuous relative perturbations. Section~\ref{sec:examples} illustrates zero-fibre collapse and irrational cones, and gives an arithmetic criterion for two generators on stars of totally real discs. The opposite outcomes for the two cubic perturbations above persist in the restricted setting.

\section{Restricted polynomial hulls and Kallin's lemma}

Throughout, $S\subset\R_+^n$ is a compact convex set with $0\in S$, and $\Nn=\{0,1,2,\dots\}$. For an integer $m\ge0$ let $\cP^S_m$ be the span of the monomials $Z^\alpha$ with $\alpha\in mS\cap\Nn^n$, and let $\cP^S=\bigcup_m\cP^S_m$. Since $0\in S$ and $S$ is convex, $mS\subset m'S$ for $m\le m'$, the union is increasing, and $(mS)+(m'S)\subset(m+m')S$. Every point of the convex cone $\Gamma:=\R_+S$ lies in some integer dilate of $S$, so with
\[
 M:=\Gamma\cap\Nn^n
\]
we have
\begin{equation}\label{eq:semigroup}
 \cP^S=\operatorname{span}\{Z^\alpha:\alpha\in M\},
\end{equation}
and $\cP^S$ is an algebra because $M$ is an additive semigroup. We do not assume that $\Gamma$ is closed or rational polyhedral, nor that $M$ is finitely generated; for instance $S=\{(x,y):0\le x\le1,\ x^2\le y\le1\}$ gives a cone that contains the vertical axis but not the horizontal one. The identity \eqref{eq:semigroup} immediately gives the following observation.

\begin{proposition}\label{prop:cones}
If $S_1,S_2\subset\R^n_+$ are compact and convex, contain $0$, and have the same conical hull, then $\cP^{S_1}=\cP^{S_2}$.
\end{proposition}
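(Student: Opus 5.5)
The plan is to reduce everything to the identity \eqref{eq:semigroup}, which says that $\cP^S$ depends only on $M=\Gamma\cap\Nn^n$ with $\Gamma=\R_+S$. Write $\Gamma_j:=\R_+S_j$ and $M_j:=\Gamma_j\cap\Nn^n$ for $j=1,2$. By hypothesis $\Gamma_1=\Gamma_2$, hence $M_1=M_2$. Applying \eqref{eq:semigroup} to $S_1$ and to $S_2$ then gives
\[
 \cP^{S_1}=\operatorname{span}\{Z^\alpha:\alpha\in M_1\}=\operatorname{span}\{Z^\alpha:\alpha\in M_2\}=\cP^{S_2}.
\]

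For completeness I would recheck the one nontrivial ingredient of \eqref{eq:semigroup}, namely $\bigcup_m(mS\cap\Nn^n)=\Gamma\cap\Nn^n$. The inclusion ``$\subset$'' is immediate, since $mS\subset\R_+S$. For ``$\supset$'', take $\alpha\in\Gamma\cap\Nn^n$ and write $\alpha=tx$ with $t\ge0$ and $x\in S$. Choose an integer $m\ge t$. Then $\alpha=m\bigl((t/m)x+(1-t/m)\cdot0\bigr)$, and the point $(t/m)x+(1-t/m)\cdot0$ lies in $S$ because $S$ is convex and contains $0$. Hence $\alpha\in mS$. The case $t=0$ is trivial, since then $\alpha=0\in 0\cdot S$.

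There is no real obstacle here. The only point needing care is the use of convexity together with $0\in S$ to pass from real dilates $tS$ to integer dilates $mS$, and this is exactly the step verified above. Compactness of $S$ plays no role in the argument.
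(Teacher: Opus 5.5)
Your proposal is correct and follows the paper's own route: the paper also obtains the proposition directly from \eqref{eq:semigroup}, i.e.\ from the fact that $\cP^S$ is spanned by the monomials with exponents in $\Gamma\cap\Nn^n$ where $\Gamma=\R_+S$. Your check that every lattice point of $\R_+S$ lies in some integer dilate $mS$ (using convexity and $0\in S$) simply spells out the one-line justification the paper gives for that identity.
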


In two variables Proposition~\ref{prop:cones} can be made completely explicit. For $S\ne\{0\}$ we put
\begin{equation}\label{eq:slopes}
 a_S:=\inf\Bigl\{\frac yx:(x,y)\in S\setminus\{0\}\Bigr\},\qquad
 b_S:=\sup\Bigl\{\frac yx:(x,y)\in S\setminus\{0\}\Bigr\},
\end{equation}
with the convention $y/0=+\infty$, so that $0\le a_S\le b_S\le\infty$. The directions of the convex cone $\Gamma$ form an interval with end slopes $a_S$ and $b_S$; hence $\Gamma$ has nonempty interior exactly when $a_S<b_S$, and then
\begin{equation}\label{eq:interior}
 \intr\Gamma=\{(x,y):x>0,\ a_Sx<y<b_Sx\}.
\end{equation}
Moreover $\Gamma$ meets the coordinate axes only at the origin as soon as $a_S>0$ and $b_S<\infty$. Thus, up to the question whether the two boundary rays belong to $\Gamma$, the algebra $\cP^S$ is determined by the two numbers $a_S,b_S$. When $S$ is the Newton polygon $\operatorname{conv}\{0,\alpha^{(1)},\dots,\alpha^{(k)}\}$ of a polynomial with nonzero constant term, $a_S$ and $b_S$ are the smallest and largest slopes of the vectors $\alpha^{(i)}$; for instance the Newton polygon of $1+Z^2W+ZW^2$ has $a_S=1/2$ and $b_S=2$, and both boundary rays belong to $\Gamma$. The interior-direction conditions below are expressed through $a_S$ and $b_S$. Boundary membership is retained explicitly when it matters. For a compact set $X\subset\C^n$ we write $P(X)$ and $\cP^S(X)$ for the closures in $C(X)$ of all polynomials and of $\cP^S$, respectively. For a compact Hausdorff space $K$ and $f,g\in C(K)$ we put
\[
 \cA_S(f,g;K):=\overline{\{P(f,g):P\in\cP^S\}},\qquad
 \cA_0(f,g;K):=\overline{\C[f,g]},
\]
the closures being taken in $C(K)$. Since $\|P(f,g)\|_K=\|P\|_X$ for $X:=(f,g)(K)$, the map $\varphi\mapsto\varphi\circ(f,g)$ is an isometric isomorphism of $\cP^S(X)$ onto $\cA_S(f,g;K)$ and of $P(X)$ onto $\cA_0(f,g;K)$. The restriction concerns only the approximating polynomials; there is no restriction on the formulas defining $f$ and $g$.

For a nonempty compact set $X\subset\C^n$ we define its $S$-polynomial hull by
\begin{equation}\label{eq:Shull}
 \widehat X^{\,S}:=\{z\in\C^n:|P(z)|\le\|P\|_X\text{ for every }P\in\cP^S\}.
\end{equation}
We call $X$ $S$-polynomially convex if $\widehat X^{\,S}=X$. Since $\cP^S$ is contained in the algebra of all polynomials,
\[
 X\subset\widehat X\subset\widehat X^{\,S},
\]
where $\widehat X$ is the ordinary polynomial hull. For the standard simplex the two hulls agree. Proposition~\ref{prop:cones} shows that the restricted hull depends only on $\Gamma$, including its boundary directions.

Unlike $\widehat X$, the set $\widehat X^{\,S}$ need not be bounded. In two variables, suppose that $\intr\Gamma\ne\emptyset$ and $\Gamma$ meets the coordinate axes only at $0$. Every nonconstant admissible monomial then vanishes on
\[
 H:=\{(Z,W)\in\C^2:ZW=0\}.
\]
Consequently $\widehat{\{0\}}^{\,S}=H$: all points of $H$ give the same polynomial values as $0$, whereas an admissible monomial with both exponents positive separates any point outside $H$ from $0$.

It is therefore useful to keep the ambient set explicit. For a set $\Omega\subset\C^n$ and a compact set $X\subset\Omega$, put
\begin{equation}\label{eq:relativeShull}
 \widehat X^{\,S}_{\Omega}:=\Omega\cap\widehat X^{\,S}.
\end{equation}
We say that $X$ is $S$-polynomially convex in $\Omega$ if $\widehat X^{\,S}_{\Omega}=X$, and put $\widehat\emptyset^{\,S}_{\Omega}:=\emptyset$. No openness assumption on $\Omega$ is needed. Taking $\Omega=\C^n$ gives the ambient notion in \eqref{eq:Shull}.

The next lemma has the same union-and-zero-fibre formulation as the classical Kallin lemma surveyed by de Paepe \cite{depaepe2001}; see also \cite[Theorem~1.1]{chi2009}. Here the separating polynomial and all hulls in the ambient space use the restricted algebra $\cP^S$.

\begin{lemma}\label{lem:kallinHull}
Let $\Omega\subset\C^n$, and let $X_1,X_2\subset\Omega$ be nonempty compact sets. Suppose that
\begin{enumerate}
\item $X_1$ and $X_2$ are $S$-polynomially convex in $\Omega$;
\item $Y_1,Y_2\subset\C$ are polynomially convex compact sets with $Y_1\cap Y_2=\{0\}$ and $0\in\partial Y_1\cap\partial Y_2$;
\item $Q\in\cP^S$ satisfies $Q(X_j)\subset Y_j$ for $j=1,2$;
\item $E:=(X_1\cup X_2)\cap Q^{-1}(0)$ is $S$-polynomially convex in $\Omega$.
\end{enumerate}
Then $X_1\cup X_2$ is $S$-polynomially convex in $\Omega$.
\end{lemma}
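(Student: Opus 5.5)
The plan is to take $z\in\Omega\cap\widehat{X}^{\,S}$, where $X:=X_1\cup X_2$, and show that $z\in X$. The argument will split according to the value $Q(z)$. Two preliminary facts will be used throughout.

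\emph{Composition.} Since $0\in M$, constants lie in $\cP^S$, and $\cP^S$ is an algebra. Hence $p\circ Q\in\cP^S$ for every one-variable polynomial $p$. Taking uniform limits, $\varphi\circ Q$ lies in $\cP^S(X)$ for every $\varphi\in P(Q(X))$.

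\emph{The planar set.} Put $Y:=Y_1\cup Y_2$. A Janiszewski-type argument, using $Y_1\cap Y_2=\{0\}$ and the connectedness of $\C\setminus Y_j$, shows that $\C\setminus Y$ is connected. Since $0\in\partial Y_1\cap\partial Y_2$, the interiors of $Y_1$ and $Y_2$ are disjoint and do not contain $0$. The function $\varphi_1$ equal to $\zeta$ on $Y_1$ and to $0$ on $Y_2$ is then continuous on $Y$ and holomorphic on $\intr Y$. By Mergelyan's theorem, $\varphi_1\in P(Y)$; define $\varphi_2$ symmetrically.

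\emph{Where $Q(z)$ lies.} For a polynomial $p$ we have $p\circ Q\in\cP^S$, so $|p(Q(z))|\le\|p\|_{Q(X)}\le\|p\|_Y$. Hence $Q(z)\in\widehat Y=Y$.

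\emph{Case $Q(z)\ne0$.} By symmetry, say $Q(z)\in Y_1\setminus\{0\}$. The function $u:=\varphi_1\circ Q$ lies in $\cP^S(X)$, equals $Q$ on $X_1$ and vanishes on $X_2$. This is well defined on $X_1\cap X_2$, because that set lies in $Q^{-1}(Y_1\cap Y_2)=Q^{-1}(0)$.

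Point evaluation at $z$ extends continuously to $\cP^S(X)$. For $P\in\cP^S$ and $m\ge1$ the function $uP^m$ is supported on $X_1$ in the obvious sense, so
\[
|Q(z)|\,|P(z)|^m=|u(z)P(z)^m|\le\|uP^m\|_X\le\|Q\|_{X_1}\|P\|_{X_1}^m .
\]
Taking $m$-th roots and letting $m\to\infty$, using $Q(z)\neq0$, gives $|P(z)|\le\|P\|_{X_1}$. So $z\in\Omega\cap\widehat{X_1}^{\,S}=X_1$ by hypothesis (1).

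\emph{Case $Q(z)=0$.} Here I want to show $z\in\widehat E^{\,S}$, and then hypothesis (4) gives $z\in E\subset X$. The plan is to let $\mu$ be a Jensen measure on $X$ for the character $P\mapsto P(z)$ of the uniform algebra $\cP^S(X)$. The pushforward $\nu:=Q_*\mu$ is then a Jensen measure for $0$ with respect to $P(Y)$.

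The key claim is that $\nu=\delta_0$, i.e. $\mu$ is carried by $E$. Granted this, $|P(z)|\le\int|P|\,d\mu\le\|P\|_E$ for all $P\in\cP^S$, which is what we need.

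To prove the claim I would use the functions $\varphi_1,\varphi_2\in P(Y)$ together with the boundary hypothesis. The hypothesis $0\in\partial Y_j$ should make $0$ a point admitting only the trivial Jensen measure for $P(Y)$. Concretely, for $a\notin Y$ close to $0$, the functions $1/(\zeta-a)$ belong to $P(Y)$. Applying Jensen's inequality to these functions, and to products with $\varphi_j$, should force $\nu(Y\setminus\{0\})=0$.

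If that route is awkward, the alternative is the classical Kallin device. It produces functions $\psi_k\in P(Y)$ with $\psi_k(0)=1$, uniformly bounded, and tending to $0$ pointwise on $Y\setminus\{0\}$. Then $\int(\psi_k\circ Q)P\,d\mu=P(z)$, and dominated convergence localizes $\mu$ to $E$. Such $\psi_k$ can be built from $\varphi_1,\varphi_2$ and the outward directions at $0$, for instance as Mergelyan approximants of suitable functions on each $Y_j$.

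\emph{Main obstacle.} I expect the main difficulty to be this localization at the common boundary point $0$: showing that representing or Jensen measures for $z$ with $Q(z)=0$ are carried by $Q^{-1}(0)$. Everything else is a direct transcription of the classical proof. The only new inputs there are that $\cP^S$ is stable under composition with polynomials in $Q$, and that the hypotheses are phrased relative to $\Omega$; the latter is harmless because we only ever test points $z\in\Omega$.
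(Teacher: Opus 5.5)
Your outline matches the paper's proof everywhere except the decisive step. The shared parts are: $Q(z)\in\widehat Y=Y$ by composing with one-variable polynomials; the piecewise function $\varphi_j$ with the $m$th-root argument when $Q(z)\neq0$; and localization to $E$ followed by hypothesis (4) when $Q(z)=0$.

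A small point first. Holomorphy of $\varphi_1$ on $\intr Y$ needs $0\notin\intr Y$. This does not follow from $\intr Y_1\cap\intr Y_2=\emptyset$ alone; it follows from connectedness of a punctured disc about $0$, as in the paper.

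The genuine gap is the case $Q(z)=0$, which you leave open, and neither of your sketches closes it.
\begin{itemize}
\item Jensen's inequality for $\zeta-a$ and $1/(\zeta-a)$, $a\notin Y$, gives only $\int\log|\zeta-a|\,d\nu=\log|a|$ on $\C\setminus Y$. Expanding at $\infty$, this says exactly that $\nu-\delta_0$ annihilates $P(Y)$, i.e.\ that $\nu$ represents $0$. You already knew that, and it says nothing about uniqueness.
\item Testing against $\varphi_1,\varphi_2$ only splits the part of $\nu$ off $0$ into multiples of representing measures for $0$ on $P(Y_1)$ and $P(Y_2)$. The same uniqueness question then reappears on each piece.
\item There need be no ``outward direction'' at $0$. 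The set $\C\setminus Y_j$ may reach $0$ only through cusp-like or spiralling channels, with no exterior disc or sector at $0$. So no explicit Mergelyan construction of the $\psi_k$ is available in general.
\end{itemize}

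What is missing is the fact that every boundary point of a polynomially convex compact set $Y\subset\C$ is a peak point for $P(Y)$. The paper quotes this from \cite[Lemma~2.4]{chi2009}. It is a genuine theorem, not a formality, and can be obtained in either of two ways.
\begin{itemize}
\item Melnikov's criterion for $R(Y)=P(Y)$. Connectedness of $\C\setminus Y$ gives, for all large $n$, a continuum of diameter at least $2^{-n-1}$ in $\{2^{-n-1}\le|\zeta|\le2^{-n}\}\setminus Y$. Since analytic capacity $\gamma$ of a continuum is at least a quarter of its diameter, $\sum_n2^n\gamma\bigl(\{2^{-n-1}\le|\zeta|\le2^{-n}\}\setminus Y\bigr)=\infty$.
\item Walsh's theorem that $P(Y)$ is a Dirichlet algebra on $\partial Y$, combined with Bishop's identification of the Choquet boundary with the peak points.
\end{itemize}
Once you have $\psi\in P(Y)$ with $\psi(0)=1$ and $|\psi|<1$ on $Y\setminus\{0\}$, your second route works with $\psi_k:=\psi^k$. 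Measures are then unnecessary: $|P(z)|=|\Lambda_z(P(\psi\circ Q)^k)|\le\|P(\psi\circ Q)^k\|_X\to\|P\|_E$, and taking $P=1$ rules out $E=\emptyset$. This is precisely the paper's argument.
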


\begin{proof}
Put $X:=X_1\cup X_2$ and $Y:=Y_1\cup Y_2$. The ordinary planar Kallin lemma, applied to $Y_1,Y_2$ and the identity polynomial, gives $\widehat Y=Y$. Also $0\in\partial Y$: otherwise a punctured disk about $0$ would be the union of two disjoint relatively closed nonempty sets, its intersections with $Y_1\setminus\{0\}$ and $Y_2\setminus\{0\}$, contradicting connectedness. If one of these intersections were empty, $0$ would be an interior point of the other $Y_j$, again a contradiction.

Fix $z\in\widehat X^{\,S}_{\Omega}$ and put $t:=Q(z)$. For every one-variable polynomial $q$ the composition $q\circ Q$ belongs to $\cP^S$, so $t\in\widehat{Q(X)}\subset Y$. Evaluation at $z$ extends to a norm-one multiplicative functional $\Lambda_z$ on $B:=\cP^S(X)$, since $|P(z)|\le\|P\|_X$ for $P\in\cP^S$. Thus, for every $\varphi\in P(Y)$,
\begin{equation}\label{eq:functionalcalculus}
 \varphi\circ Q\in B,\qquad \Lambda_z(\varphi\circ Q)=\varphi(t).
\end{equation}
These assertions follow by uniformly approximating $\varphi$ on $Y$ by polynomials.

Suppose first that $t\in Y_j\setminus\{0\}$. Define $\varphi_j(\zeta):=\zeta$ on $Y_j$ and $\varphi_j(\zeta):=0$ on the other set. This function is continuous on $Y$ and holomorphic on its interior, so Mergelyan's theorem gives $\varphi_j\in P(Y)$. For $P\in\cP^S$ and every integer $k\ge1$, \eqref{eq:functionalcalculus} yields
\[
 |t|\,|P(z)|^k
 =|\Lambda_z((\varphi_j\circ Q)P^k)|
 \le\|\varphi_j\|_Y\,\|P\|_{X_j}^k.
\]
Taking $k$th roots and letting $k\to\infty$ gives $|P(z)|\le\|P\|_{X_j}$. Hence $z\in\widehat{X_j}^{\,S}_{\Omega}=X_j$.

Now suppose $t=0$. Every boundary point of a polynomially convex planar compact set is a peak point for its polynomial algebra; see \cite[Lemma~2.4]{chi2009}. Choose $\psi\in P(Y)$ with $\psi(0)=1$ and $|\psi|<1$ on $Y\setminus\{0\}$. For $P\in\cP^S$ and $k\ge1$,
\[
 |P(z)|=|\Lambda_z(P(\psi\circ Q)^k)|
 \le\|P(\psi\circ Q)^k\|_X.
\]
If $E\ne\emptyset$, the right-hand side tends to $\|P\|_E$: near $E$, continuity bounds $|P|$ by $\|P\|_E+\varepsilon$, while on the compact complement of that neighbourhood $|\psi\circ Q|$ has maximum strictly less than $1$. Thus $z\in\widehat E^{\,S}_{\Omega}=E$. If $E=\emptyset$, taking $P=1$ instead gives $1\le\|\psi\circ Q\|_X^k\to0$, a contradiction. This proves $\widehat X^{\,S}_{\Omega}=X$.
\end{proof}

With $\Omega=\C^n$, Lemma~\ref{lem:kallinHull} says that the union of two $S$-polynomially convex compact sets is $S$-polynomially convex under the stated separation and zero-fibre hypotheses. With the standard simplex it recovers the classical lemma. The proof adapts the classical localization argument: a piecewise planar function isolates each piece, and powers of a peak function isolate the zero fibre. It requires neither finite generation of $\Gamma\cap\Nn^n$ nor compactness of a restricted hull.

For later use we also record how uniform density determines the restricted hull. The following proposition separates the analytic approximation issue from the points that admissible polynomials cannot distinguish.

\begin{proposition}\label{prop:saturation}
Let $X\subset\C^n$ be a nonempty compact set with $\cP^S(X)=C(X)$. Then
\begin{equation}\label{eq:saturation}
 \widehat X^{\,S}=\{z\in\C^n:\text{there is }x\in X\text{ with }P(z)=P(x)\text{ for every }P\in\cP^S\}.
\end{equation}
In particular, if $\cP^S$ separates points of a set $\Omega$ containing $X$, then $X$ is $S$-polynomially convex in $\Omega$.
\end{proposition}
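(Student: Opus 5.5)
We prove the two inclusions in \eqref{eq:saturation} separately and then read off the relative statement. The inclusion ``$\supset$'' is immediate from the definition \eqref{eq:Shull}. If $z\in\C^n$ and $x\in X$ satisfy $P(z)=P(x)$ for every $P\in\cP^S$, then $|P(z)|=|P(x)|\le\|P\|_X$ for all $P\in\cP^S$, so $z\in\widehat X^{\,S}$. This direction does not use the density hypothesis.

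For ``$\subset$'', fix $z\in\widehat X^{\,S}$. As in the proof of Lemma~\ref{lem:kallinHull}, the inequality $|P(z)|\le\|P\|_X$ for $P\in\cP^S$ shows that evaluation at $z$ is a well-defined bounded linear functional of norm at most one on the dense subspace $\cP^S|_X$. (If $P|_X=0$ then $P(z)=0$, so the functional is well defined.) It therefore extends uniquely to a bounded functional $\Lambda_z$ on $\cP^S(X)$. The extension is multiplicative by continuity, since $\cP^S$ is an algebra. Moreover $\Lambda_z(1)=1$ because $1=Z^0\in\cP^S$, so $\Lambda_z$ is a character.

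By hypothesis $\cP^S(X)=C(X)$, and the characters of $C(X)$ for compact Hausdorff $X$ are exactly the point evaluations. Hence there is $x\in X$ with $\Lambda_z(u)=u(x)$ for all $u\in C(X)$. Applying this to $u=P|_X$ with $P\in\cP^S$ gives $P(z)=\Lambda_z(P|_X)=P(x)$, which is \eqref{eq:saturation}. The one step needing care is this identification of characters with points. It is standard (Gelfand–Kolmogorov, or the fact that the kernel of a character is a maximal ideal and each such ideal in $C(X)$ consists of the functions vanishing at some point of $X$), so I expect no real obstacle.

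For the final assertion, let $\cP^S$ separate the points of $\Omega\supset X$, and take $z\in\widehat X^{\,S}_\Omega=\Omega\cap\widehat X^{\,S}$. By \eqref{eq:saturation} there is $x\in X\subset\Omega$ with $P(z)=P(x)$ for every $P\in\cP^S$. Since both points lie in $\Omega$, separation forces $z=x\in X$. Thus $\widehat X^{\,S}_\Omega\subset X$, and the reverse inclusion is trivial, so $X$ is $S$-polynomially convex in $\Omega$.
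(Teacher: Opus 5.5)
Your proof is correct and follows the paper's route: you extend evaluation at $z$ to a character of $\cP^S(X)=C(X)$, identify it with evaluation at some $x\in X$, note that the reverse inclusion follows directly from the definition, and use point separation on $\Omega$ to force $z=x$. You also supply routine details that the paper leaves implicit, namely well-definedness on the dense subspace, multiplicativity of the extension, and $\Lambda_z(1)=1$.
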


\begin{proof}
For $z\in\widehat X^{\,S}$, evaluation at $z$ extends to a character of $\cP^S(X)=C(X)$. Every character of $C(X)$ is evaluation at a point $x\in X$, which gives the asserted equalities. Conversely, those equalities imply all inequalities defining $\widehat X^{\,S}$. If $z,x\in\Omega$ and $\cP^S$ separates points of $\Omega$, they force $z=x$.
\end{proof}

In two variables with $\intr\Gamma\ne\emptyset$ and no nonzero axis direction, the equivalence classes defined by equality of all admissible polynomial values are the singletons in $(\C^*)^2$ and the set $H$. Indeed, choose $\alpha,\alpha+(1,0),\alpha+(0,1)\in M$ with both coordinates of $\alpha$ positive; this is possible because the open cone $\intr\Gamma$ contains a lattice point $\beta$, and $\beta+e/N\in\intr\Gamma$ for $e\in\{(1,0),(0,1)\}$ and all large $N$, so that $\alpha:=N\beta$ works. On $(\C^*)^2$, the ratios of the latter two monomials to $Z^{\alpha_1}W^{\alpha_2}$ recover $Z,W$; that denominator also separates every point of $(\C^*)^2$ from $H$. Consequently
\begin{equation}\label{eq:OmegaStar}
 \Omega_*:=(\C^*)^2\cup\{(0,0)\}
\end{equation}
is a useful ambient set on which $\cP^S$ separates points. If $\cP^S(X)=C(X)$ and $X\cap H\ne\emptyset$, then \eqref{eq:saturation} gives $\widehat X^{\,S}=X\cup H$. Thus uniform density alone does not imply ambient $S$-polynomial convexity.

\section{Weighted approximation}

We next give the weighted approximation statement used to construct two generators. In case \textup{(a)} with $d=0$ it is the approximation form of Kallin's lemma for separating polynomials with values on rays; compare \cite{kallin1966,depaepe2001,stout2007}. Its proof uses nothing but the functional calculus of the single function $Q$ inside the algebra $B$.

\begin{lemma}\label{lem:kallin}
Let $K$ be a compact Hausdorff space, let $B\subset C(K)$ be a closed subalgebra containing the constants, let $Q\in B$, and put $E:=Q^{-1}(0)$. Assume that one of the following conditions holds:
\begin{itemize}
\item[\textup{(a)}] $Q(K)\subset\bigcup_{j=1}^se^{i\theta_j}[0,\infty)$, where the numbers $e^{i\theta_j}$ are pairwise distinct; in this case put $K_j^*:=Q^{-1}\bigl(e^{i\theta_j}(0,\infty)\bigr)$;
\item[\textup{(b)}] $\opRe Q\ge0$ on $K$; in this case put $s:=1$ and $K_1^*:=K\setminus E$.
\end{itemize}
Let $u\in C(K)$ be constant on $E$, and suppose that for every $j$ and every $\varepsilon>0$ there are an integer $d\ge0$ and a function $w\in C(K)$ such that $Q^dw\in B$ and $|u-w|<\varepsilon$ on $K_j^*$. Then $u\in B$.
\end{lemma}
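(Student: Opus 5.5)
The plan is to reduce to the case $u|_E=0$ and then build approximants of $u$ in $B$ of the form $\Phi(Q)\,Q^dw$, where $\Phi$ comes from the one-variable functional calculus of $Q$. Since $B$ contains the constants, we may replace $u$ by $u-c$, where $c$ is the constant value of $u$ on $E$; the hypothesis still holds with $w-c$ and $d=0$, and $Q^d(w-c)=Q^dw-cQ^d\in B$. So assume $u=0$ on $E$. If $E=\emptyset$ the argument below works unchanged. Fix $\varepsilon>0$. By compactness and continuity there is $\delta>0$ with $|u|<\varepsilon$ on $\{|Q|\le\delta\}$. We use one basic fact throughout: if $Y\subset\C$ is compact, polynomially convex, contains $Q(K)$, and $\Phi\in P(Y)$, then $\Phi\circ Q\in B$. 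This holds because polynomials in $Q$ lie in $B$ and $B$ is closed.

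\emph{Case (a).} Let $R>\|Q\|_K$ and let $Y:=\bigcup_j e^{i\theta_j}[0,R]$. This is a finite union of segments meeting only at $0$, so it has empty interior and connected complement, and Mergelyan's theorem gives $P(Y)=C(Y)$. Choose a continuous cutoff $\eta$ on $[0,\infty)$ with $\eta=0$ on $[0,\delta/2]$, $\eta=1$ on $[\delta,\infty)$ and $0\le\eta\le1$. For each $j$:
\begin{enumerate}
\item take $d_j$ and $w_j$ from the hypothesis with $|u-w_j|<\varepsilon$ on $K_j^*$;
\item define $\Phi_j(\zeta):=\eta(|\zeta|)\zeta^{-d_j}$ on the $j$th segment and $\Phi_j:=0$ on the other segments.
\end{enumerate}
Since $\eta$ vanishes near $0$, $\Phi_j$ is continuous on $Y$, so $\Phi_j\in P(Y)$ and $v:=\sum_j\Phi_j(Q)\,Q^{d_j}w_j\in B$. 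On $K$ we have $\Phi_j(Q)Q^{d_j}=\rho_j(Q)$, where $\rho_j(\zeta):=\eta(|\zeta|)\mathbf 1_{\text{ray }j}$; this vanishes off $K_j^*$, and at each point at most one $\rho_j(Q)$ is nonzero. Hence
\[
|u-v|\le|1-\eta(|Q|)|\,|u|+\textstyle\sum_j\rho_j(Q)\,|u-w_j|<2\varepsilon,
\]
because the first term is supported in $\{|Q|\le\delta\}$ and the second only involves points of $K_j^*$. Since $\varepsilon$ is arbitrary and $B$ is closed, $u\in B$.

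\emph{Case (b).} Smooth cutoffs of $Q$ are no longer available, since $Q(K)$ may have interior, so I would replace them by the holomorphic weights
\[
\Phi_n(\zeta):=\frac{\zeta}{(\zeta+1/n)^{d+1}},\qquad \Phi_n(\zeta)\,\zeta^{d}=\Bigl(\frac{\zeta}{\zeta+1/n}\Bigr)^{d+1}=:\phi_n(\zeta).
\]
The order of choices matters. First take $d$ and $w$ from the hypothesis with $|u-w|<\varepsilon$ on $K\setminus E$. Then let $Y:=\{\opRe\zeta\ge0,\ |\zeta|\le R\}$, a convex and hence polynomially convex set containing $Q(K)$. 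The function $\Phi_n$ is holomorphic near $Y$ because its pole $-1/n$ lies off $Y$, so Runge's theorem gives $\Phi_n\in P(Y)$, and therefore $\phi_n(Q)w=\Phi_n(Q)\,Q^dw\in B$. On the right half-plane $|\zeta/(\zeta+1/n)|\le1$, and $\phi_n\to1$ uniformly on $\{|\zeta|\ge\delta\}\cap Y$ as $n\to\infty$. Then
\[
|u-\phi_n(Q)w|\le|u|\,|1-\phi_n(Q)|+|\phi_n(Q)|\,|u-w|.
\]
The first term is at most $2\varepsilon$ on $\{|Q|\le\delta\}$ and tends to $0$ uniformly on $\{|Q|\ge\delta\}$. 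The second term is less than $\varepsilon$ on $K\setminus E$ and vanishes on $E$, where $\phi_n(Q)=0$.

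The main obstacle is organizing these choices in case (b). We need a weight that equals $\zeta^{-d}$ times something tending to $1$ away from $0$, is uniformly bounded by $1$ on all of $Q(K)$, and is uniformly approximable by polynomials, all with $d$ dependent on $w$. The rational weights $\phi_n$ with pole at $-1/n$ satisfy all three requirements, and the argument depends only on the bound $|\phi_n|\le1$, which holds independently of $d$ and $n$.
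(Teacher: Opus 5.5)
Your proposal is correct and follows essentially the same route as the paper: subtract the constant value on $E$, use Mergelyan cutoffs of the form $t^{-d}\chi_j(t)$ on the union of segments in case (a), and in case (b) use the bounded rational weights $\bigl(Q/(Q+\eta)\bigr)^{d}$, with the exponent raised by one so that they vanish on $E$. The only differences are cosmetic: you keep a separate exponent $d_j$ for each ray instead of the paper's common $d=1+\max_j d_j$, and in your reduction step ``with $w-c$ and $d=0$'' should read ``with $w-c$ and the same $d$''.
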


\begin{proof}
Let $c$ be the value of $u$ on $E$ if $E\ne\emptyset$, let $c=0$ otherwise, and put $v:=u-c$. Fix $\varepsilon\in(0,1)$. Since $v$ is continuous and vanishes on the compact set $E$, there is $\delta>0$ such that $|v|<\varepsilon$ on $K_\delta:=\{x\in K:|Q(x)|\le2\delta\}$. For each $j$ choose $d_j$ and $w_j$ as in the hypothesis and put $d:=1+\max_jd_j$ and $v_j:=w_j-c$. Since $Q$ and the constants belong to $B$, we have $Q^dv_j=Q^{d-d_j}\,Q^{d_j}w_j-cQ^d\in B$, and $|v-v_j|<\varepsilon$ on $K_j^*$.

Assume \textup{(a)}. For each $j$ choose a continuous function $\chi_j$ on $\Sigma:=\bigcup_ke^{i\theta_k}[0,\infty)$ with values in $[0,1]$, vanishing outside $e^{i\theta_j}[\delta,\infty)$ and equal to $1$ on $e^{i\theta_j}[2\delta,\infty)$, and put $\psi_j(t):=t^{-d}\chi_j(t)$ for $t\ne0$, extended by $\psi_j(0):=0$. The compact set $Q(K)\subset\Sigma$ has empty interior and connected complement, so by Mergelyan's theorem \cite{gamelin} $\psi_j$ is a uniform limit of polynomials on $Q(K)$, and therefore $\psi_j(Q)\in B$. It follows that
\[
 U:=\sum_{j=1}^s\psi_j(Q)\,Q^dv_j=\sum_{j=1}^s\chi_j(Q)\,v_j\in B.
\]
At points of $E$ both $U$ and $v$ vanish. If $x\in K_k^*$, then $\chi_j(Q(x))=0$ for $j\ne k$, and since $1-\chi_k(Q(x))\ne0$ only when $x\in K_\delta$,
\[
 |v-U|\le\chi_k(Q)\,|v-v_k|+\bigl(1-\chi_k(Q)\bigr)|v|<2\varepsilon .
\]
As $K=E\cup\bigcup_kK_k^*$, we obtain $\|v-U\|_K\le2\varepsilon$.

Assume \textup{(b)}. For $\eta>0$ the function $(t+\eta)^{-d}$ is holomorphic near the compact convex set $\{t:\opRe t\ge0,\ |t|\le\|Q\|_K\}$, which is polynomially convex, so it is a uniform limit of polynomials there and $(Q+\eta)^{-d}\in B$. Put $\rho_\eta:=Q/(Q+\eta)$; then $|\rho_\eta|\le1$ because $\opRe Q\ge0$, and
\[
 U_\eta:=(Q+\eta)^{-d}\,Q^dv_1=\rho_\eta^dv_1\in B .
\]
On $E$ both functions vanish, since $d\ge1$. On $K\setminus E$ we have $|v-U_\eta|\le|v-v_1|+|1-\rho_\eta^d|\,|v|$, which is at most $3\varepsilon$ on $K_\delta$. On $K\setminus K_\delta$ we have $|Q+\eta|\ge|Q|\ge2\delta$, hence $|1-\rho_\eta^d|\le d|1-\rho_\eta|\le d\eta/(2\delta)$, and this is smaller than $\varepsilon/(1+\|v\|_K)$ for small $\eta$. Thus $\|v-U_\eta\|_K\le3\varepsilon$ for small $\eta$.

In both cases $v$ lies within $3\varepsilon$ of $B$ for every $\varepsilon$. Since $B$ is closed and contains the constants, $u=v+c\in B$.
\end{proof}

Taking $B=\cP^S(X)$ and $d=0$ gives the following approximation consequence on finitely many rays. Its density conclusion is distinct from the hull conclusion of Lemma~\ref{lem:kallinHull}.

\begin{corollary}\label{cor:kallinS}
Let $X_1,\dots,X_s\subset\C^n$ be compact sets, $X:=\bigcup_jX_j$, and let $Q\in\cP^S$ satisfy $Q(X_j)\subset e^{i\theta_j}[0,\infty)$, where the numbers $e^{i\theta_j}$ are pairwise distinct. Put $E:=X\cap Q^{-1}(0)$. If $u\in C(X)$ is constant on $E$ and $u|_{X_j}\in\cP^S(X_j)$ for every $j$, then $u\in\cP^S(X)$. In particular, if $\cP^S(X_j)=C(X_j)$ for every $j$ and $E$ has at most one point, then $\cP^S(X)=C(X)$, and $X$ is polynomially convex.
\end{corollary}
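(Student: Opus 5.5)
The plan is to apply Lemma~\ref{lem:kallin} in case \textup{(a)}, with $K:=X$, $B:=\cP^S(X)$, the function $Q|_X$ in place of $Q$, and always $d=0$. First I check the standing hypotheses. $B$ is a closed subalgebra of $C(X)$ containing the constants. Since $Q\in\cP^S$, its restriction lies in $B$. The zero set of $Q|_X$ is exactly $E$. Because $Q(X_j)\subset e^{i\theta_j}[0,\infty)$ for each $j$, we get $Q(X)\subset\bigcup_je^{i\theta_j}[0,\infty)$ with pairwise distinct directions, so condition \textup{(a)} holds. In this situation $K_j^*=X\cap Q^{-1}\bigl(e^{i\theta_j}(0,\infty)\bigr)$.

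The one point needing care is the approximation hypothesis, which is stated on $K_j^*$ and not on $X_j$. I would show $K_j^*\subset X_j$. Let $x\in K_j^*$ and choose $k$ with $x\in X_k$. Then $Q(x)\in e^{i\theta_k}[0,\infty)$, and also $Q(x)\in e^{i\theta_j}(0,\infty)$. Since $e^{i\theta_j}\ne e^{i\theta_k}$ for $k\ne j$, two distinct closed rays from $0$ meet only at $0$, while $Q(x)\neq0$. Hence $k=j$ and $x\in X_j$.

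Now fix $j$ and $\varepsilon>0$. Since $u|_{X_j}\in\cP^S(X_j)$, there is $P\in\cP^S$ with $|u-P|<\varepsilon$ on $X_j$, and therefore on $K_j^*$. Take $d=0$ and $w:=P|_X$; then $Q^0w=w\in B$. The remaining hypothesis, that $u$ is constant on $E$, is given, so Lemma~\ref{lem:kallin} yields $u\in\cP^S(X)$.

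For the final assertion, assume $\cP^S(X_j)=C(X_j)$ for every $j$ and that $E$ has at most one point. Then every $u\in C(X)$ is automatically constant on $E$, and each restriction $u|_{X_j}$ lies in $\cP^S(X_j)$. The first part gives $u\in\cP^S(X)$, so $\cP^S(X)=C(X)$. Since $\cP^S$ is contained in the algebra of all polynomials, also $P(X)=C(X)$.

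To get polynomial convexity, take $z\in\widehat X$. Evaluation at $z$ extends to a character of $P(X)=C(X)$, and every such character is evaluation at some $x\in X$. All coordinate functions then agree at $z$ and $x$, so $z=x\in X$. This is the same argument as in Proposition~\ref{prop:saturation}, now for the unrestricted algebra, which separates all points of $\C^n$. The only genuinely nontrivial step is the inclusion $K_j^*\subset X_j$; everything else is a direct verification.
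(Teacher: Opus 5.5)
Your proposal is correct and follows the paper's proof essentially step for step. You apply Lemma~\ref{lem:kallin}\,(a) with $B=\cP^S(X)$ and $d=0$, using $K_j^*\subset X_j$ (which follows from the distinctness of the rays), and then pass to $P(X)=C(X)$ and polynomial convexity via the fact that characters of $C(X)$ are point evaluations.
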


\begin{proof}
Condition \textup{(a)} of Lemma~\ref{lem:kallin} holds for $B=\cP^S(X)$, which contains $Q$ because $\cP^S$ is an algebra. A point $x\in X$ with $Q(x)\in e^{i\theta_j}(0,\infty)$ cannot lie in $X_k$ for $k\ne j$, so the set $K_j^*$ of the lemma is contained in $X_j$. Uniform approximants of $u$ on $X_j$ by elements of $\cP^S$ provide the functions $w$ with $d=0$. For the last assertion note that $\cP^S(X)\subset P(X)$, so $P(X)=C(X)$, and the maximal ideal space of $P(X)$, which is the polynomial hull of $X$, reduces to $X$.
\end{proof}

The conclusion $\cP^S(X)=C(X)$ in Corollary~\ref{cor:kallinS} yields ordinary polynomial convexity of $X$, but not $S$-polynomial convexity in $\C^n$. By the discussion following Proposition~\ref{prop:saturation}, in two variables with no nonzero axis direction, a set $X$ with $\cP^S(X)=C(X)$ that meets $H$ has restricted hull $X\cup H$. It is, however, $S$-polynomially convex in $\Omega_*$. This is why Lemma~\ref{lem:kallinHull} is formulated for relative hulls.

In two variables the separating function will be a monomial, and the admissible pairs we use are homogeneous of the same degree. The following elementary fact describes exactly which points such a pair fails to separate.

\begin{lemma}\label{lem:homog}
Let $\beta\in\Nn^2$ with $\beta_1,\beta_2\ge1$, let $N\ge1$, and put
\[
 h:=Z^{\beta_1}W^{\beta_2},\quad F:=Z^{N\beta_1+1}W^{N\beta_2},\quad G:=Z^{N\beta_1}W^{N\beta_2+1},\quad L:=N(\beta_1+\beta_2)+1 .
\]
Then $F=h^NZ$, $G=h^NW$ and $F^{\beta_1}G^{\beta_2}=h^L$. Moreover, two points $p,p'\in(\C^*)^2$ satisfy $F(p)=F(p')$ and $G(p)=G(p')$ if and only if $p'=\omega p$ for some $\omega$ with $\omega^L=1$.
\end{lemma}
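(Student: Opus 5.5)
The identities are direct exponent bookkeeping, so we verify them first. We have $h^NZ=Z^{N\beta_1+1}W^{N\beta_2}=F$ and $h^NW=G$. Then
\[
 F^{\beta_1}G^{\beta_2}=h^{N(\beta_1+\beta_2)}Z^{\beta_1}W^{\beta_2}=h^{N(\beta_1+\beta_2)+1}=h^L .
\]

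For the separation statement, the ``if'' direction is a homogeneity check. Both $F$ and $G$ are monomials of total degree $N(\beta_1+\beta_2)+1=L$, so $F(\omega p)=\omega^LF(p)=F(p)$ when $\omega^L=1$, and likewise for $G$.

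For the ``only if'' direction, let $p=(z,w)$ and $p'=(z',w')$ be points of $(\C^*)^2$ with $F(p)=F(p')$ and $G(p)=G(p')$.
\begin{itemize}
\item From $F^{\beta_1}G^{\beta_2}=h^L$ we get $h(p)^L=h(p')^L$. Since $h(p)\ne0$, we can write $h(p')=\eta\,h(p)$ with $\eta^L=1$.
\item Dividing $F=h^NZ$ gives $z'=F(p')/h(p')^N=F(p)/(\eta^Nh(p)^N)=\eta^{-N}z$. In the same way, $w'=\eta^{-N}w$. So $p'=\omega p$ with $\omega:=\eta^{-N}$, and $\omega^L=\eta^{-NL}=1$.
\end{itemize}
Only nonvanishing of $h(p)$ is needed for these divisions, and that holds because $p\in(\C^*)^2$.

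No step here is an obstacle. The only point needing care is the order of the argument: we must first recover $h$ up to an $L$-th root of unity from $h^L$, and only then divide. Dividing directly, for instance taking $F/G=Z/W$, recovers only the ratio $z/w$, and then showing $\omega^L=1$ would need a separate argument.
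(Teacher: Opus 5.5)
Your proof is correct and matches the paper's argument essentially step for step: you check the exponent identities, deduce $h(p')^L=h(p)^L$ from $F^{\beta_1}G^{\beta_2}=h^L$, write $h(p')=\eta\,h(p)$ with $\eta^L=1$, divide to get $p'=\eta^{-N}p$, and use homogeneity of degree $L$ for the converse. The one addition is that you say explicitly that $h(p)\ne0$ on $(\C^*)^2$ justifies the divisions, which the paper leaves implicit.
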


\begin{proof}
The identities are checked by comparing exponents; for instance the exponent of $Z$ in $F^{\beta_1}G^{\beta_2}$ is $\beta_1(N\beta_1+1)+\beta_2N\beta_1=\beta_1L$. If $F$ and $G$ agree at $p$ and $p'$, then $h(p')^L=h(p)^L$, so $h(p')=\zeta h(p)$ with $\zeta^L=1$, and $p'=(F,G)(p')/h(p')^N=\zeta^{-N}p$. Conversely, $F$ and $G$ are homogeneous of degree $L$.
\end{proof}

\section{Two admissible generators}

From now on $n=2$ and $a_S<b_S$. Let $\beta$ be a lattice point in $\intr\Gamma$, so that $\beta_1,\beta_2\ge1$ and $a_S<\beta_2/\beta_1<b_S$ by \eqref{eq:interior}. We call an integer $N\ge1$ \emph{admissible for $\beta$} if
\begin{equation}\label{eq:nu}
 \nu^{(1)}:=N\beta+(1,0)\in M,\qquad\nu^{(2)}:=N\beta+(0,1)\in M .
\end{equation}

\begin{lemma}\label{lem:weights}
The admissible integers for $\beta$ are exactly the integers $N\ge N_S(\beta)$, where $N_S(\beta)$ denotes the smallest one, and
\begin{equation}\label{eq:NS}
 N_S(\beta)\le1+\max\Bigl\{\frac{a_S}{\beta_2-a_S\beta_1},\ \frac{1}{b_S\beta_1-\beta_2}\Bigr\},
\end{equation}
where the second quotient is read as $0$ if $b_S=\infty$.
\end{lemma}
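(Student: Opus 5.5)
Two things must be shown: that the set of admissible $N$ is upward closed and nonempty, and the explicit bound \eqref{eq:NS}.

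\emph{Upward closedness.} Since $\beta\in\intr\Gamma\subset\Gamma$, $\Gamma$ is a convex cone and $M=\Gamma\cap\Nn^2$ is an additive semigroup, $\nu^{(i)}\in M$ for $N$ implies $(N+1)\beta+e_i=\nu^{(i)}+\beta\in M$. So the admissible integers form a set of the form $\{N:N\ge N_S(\beta)\}$ once it is nonempty. Nonemptiness follows from the bound, or directly from the argument after Proposition~\ref{prop:saturation}: $\beta+e_i/N\in\intr\Gamma$ for large $N$, and scaling by $N$ gives $N\beta+e_i\in\Gamma$.

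\emph{The bound.} I use the description \eqref{eq:interior} of $\intr\Gamma$. Since $\intr\Gamma\subset\Gamma$, it suffices that $N\beta+e_i\in\intr\Gamma$ for $i=1,2$.

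For $\nu^{(1)}=(N\beta_1+1,N\beta_2)$ the first coordinate is positive. The upper inequality $N\beta_2<b_S(N\beta_1+1)$ is automatic, because $\beta_2<b_S\beta_1$ (or $b_S=\infty$). The lower inequality $a_S(N\beta_1+1)<N\beta_2$ is equivalent to
\[
N(\beta_2-a_S\beta_1)>a_S, \quad\text{i.e.}\quad N>\frac{a_S}{\beta_2-a_S\beta_1}.
\]

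For $\nu^{(2)}=(N\beta_1,N\beta_2+1)$ the lower inequality $a_SN\beta_1<N\beta_2+1$ is automatic. The upper inequality $N\beta_2+1<b_SN\beta_1$ is equivalent to
\[
N>\frac{1}{b_S\beta_1-\beta_2},
\]
and it holds vacuously when $b_S=\infty$, matching the convention that the quotient is read as $0$.

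Both conditions hold for any integer $N$ strictly exceeding the maximum $m$ of the two quotients. The smallest integer $N\ge1$ with $N>m$ is at most $1+m$ (it is $\lfloor m\rfloor+1$). Hence $N_S(\beta)\le 1+m$, which is \eqref{eq:NS}.

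The only delicate point is bookkeeping the edge cases. One needs $a_S<\infty$, which holds since $a_S<b_S$. The strict inequalities in \eqref{eq:interior} are what make the two inequalities above automatic and ensure the denominators $\beta_2-a_S\beta_1$ and $b_S\beta_1-\beta_2$ are positive. No boundary-ray membership of $\Gamma$ is needed, since we only land in the interior. Note also that the bound is only an upper bound: $N_S(\beta)$ may be smaller, for instance if boundary rays belong to $\Gamma$.
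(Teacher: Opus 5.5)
Your proof is correct and follows essentially the same argument as the paper. You get upward closedness because $M$ is a semigroup containing $\beta$, and you get the bound by checking the two inequalities of \eqref{eq:interior} for $N\beta+(1,0)$ and $N\beta+(0,1)$, one of which is automatic in each case; your inequalities are the paper's slope comparisons with the denominators cleared, and your remark that the least integer exceeding the maximum is at most one more than it fills in a step the paper leaves implicit.
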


\begin{proof}
If $N\beta+e\in M$ then $(N+1)\beta+e\in M$, because $M$ is a semigroup containing $\beta$; hence the admissible integers form a final segment. The slope of $N\beta+(1,0)$ is $N\beta_2/(N\beta_1+1)$, which is smaller than $\beta_2/\beta_1<b_S$ and exceeds $a_S$ precisely when $N(\beta_2-a_S\beta_1)>a_S$. Similarly the slope of $N\beta+(0,1)$ exceeds $\beta_2/\beta_1>a_S$ and is smaller than $b_S$ precisely when $N(b_S\beta_1-\beta_2)>1$. If $N$ is larger than both quotients in \eqref{eq:NS}, the two vectors therefore lie in $\intr\Gamma$ by \eqref{eq:interior}, and being lattice points they belong to $M$.
\end{proof}

Boundary membership can lower the bound \eqref{eq:NS}: for the Newton polygon of $1+Z^2W+ZW^2$ and $\beta=(1,1)$ it gives $N_S(\beta)\le2$, whereas $N_S(\beta)=1$ because the boundary vectors $(2,1)$ and $(1,2)$ belong to $\Gamma$. Given $f,g\in C(K)$ and an admissible $N$ we put
\begin{equation}\label{eq:hFG}
 h:=f^{\beta_1}g^{\beta_2},\quad F:=h^Nf,\quad G:=h^Ng,\quad L:=N(\beta_1+\beta_2)+1,\quad E:=h^{-1}(0),
\end{equation}
so that $F$ and $G$ are values of the admissible monomials with exponents $\nu^{(1)},\nu^{(2)}$, and $F^{\beta_1}G^{\beta_2}=h^L$ by Lemma~\ref{lem:homog}. We consider two conditions on the compact planar set $h(K)$:
\begin{align}
 &h(K)\subset\{0\}\cup\{t\ne0:|\arg t|\le\theta\}\quad\text{for some }\theta<\pi/L; \tag{S}\label{eq:S}\\
 &h(K)\subset\textstyle\bigcup_{j=1}^se^{i\theta_j}[0,\infty),\quad\text{where the numbers }e^{iL\theta_j}\text{ are pairwise distinct}. \tag{R}\label{eq:R}
\end{align}

The next theorem identifies the functions generated by the two admissible monomials. If $a_S>0$ and $b_S<\infty$, the cone meets the coordinate axes only at the origin, as required in its last assertion.

\begin{theorem}\label{thm:core}
Let $\beta\in\intr\Gamma\cap\Nn^2$, let $N\ge N_S(\beta)$, let $K$ be a compact Hausdorff space and $f,g\in C(K)$, and define $h,F,G,L,E$ by \eqref{eq:hFG}. If \eqref{eq:S} or \eqref{eq:R} holds, then
\begin{equation}\label{eq:core}
 \overline{\C[F,G]}=J_E:=\{u\in\cA_0(f,g;K):u|_E\text{ is constant}\}.
\end{equation}
Consequently, if $E$ has at most one point, or if $f=g=0$ on $E$, then
\begin{equation}\label{eq:fullcore}
 \overline{\C[F,G]}=\cA_S(f,g;K)=\cA_0(f,g;K).
\end{equation}
If moreover $\Gamma$ meets the coordinate axes only at the origin, then, whatever the cardinality of $E$,
\begin{equation}\label{eq:collapse}
 \cA_S(f,g;K)=J_E,
\end{equation}
and in particular $\cA_S(f,g;K)=C(K)$ if and only if $\cA_0(f,g;K)=C(K)$ and $E$ has at most one point.
\end{theorem}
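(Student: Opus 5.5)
The plan is to set $B:=\overline{\C[F,G]}\subset C(K)$ and prove the two inclusions in \eqref{eq:core}. The inclusion $B\subset J_E$ is immediate. Indeed $F=h^Nf$ and $G=h^Ng$ lie in $\cA_0(f,g;K)$, and both vanish on $E=h^{-1}(0)$ because $N\ge1$. So every polynomial in $F,G$ is constant on $E$, and this persists under uniform limits.

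For the reverse inclusion, the first step is to show $h\in B$. By Lemma~\ref{lem:homog} we have $h^L=F^{\beta_1}G^{\beta_2}\in B$. Put $Q_0:=h^L$. Under \eqref{eq:S}, $Q_0(K)$ lies in $\{0\}\cup\{|\arg t|\le L\theta\}$ with $L\theta<\pi$. On $h(K)$ the function $h$ is then the principal branch of $Q_0^{1/L}$. The principal $L$th root is continuous on the closed sector $\{t:|\arg t|\le L\theta,\ |t|\le R\}$, which is polynomially convex, and holomorphic on its interior. Mergelyan's theorem therefore gives polynomials $p_k$ with $p_k(Q_0)\to h$ uniformly, so $h\in B$. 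Under \eqref{eq:R}, $Q_0(K)$ lies in the finite star of rays $e^{iL\theta_j}[0,\infty)$, and these rays are distinct. On this star define $\rho(t)=e^{i\theta_j}|t|^{1/L}$ on the $j$th ray; this is continuous. A star of rays has empty interior and connected complement, so Mergelyan gives $\rho\in P(Q_0(K))$, hence $h=\rho(Q_0)\in B$.

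The second step applies Lemma~\ref{lem:kallin} with $Q:=h\in B$. Under \eqref{eq:S} we have $\opRe h\ge0$, since $\theta<\pi/L\le\pi/2$ because $L\ge3$; this gives case (b). Under \eqref{eq:R} the $e^{i\theta_j}$ are distinct because the $e^{iL\theta_j}$ are, and case (a) applies. Take $u\in J_E$ and $\varepsilon>0$. Choose a polynomial $P$ with $|u-P(f,g)|<\varepsilon$ on $K$, and set $w:=P(f,g)$ and $d:=N\deg P$. Each monomial satisfies $h^{N(a+b)}f^ag^b=F^aG^b$, so $h^{N\deg P-N(a+b)}F^aG^b\in B$. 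Summing over the monomials of $P$ gives $h^dw\in B$. Lemma~\ref{lem:kallin} then yields $u\in B$. I expect the only delicate step to be the root extraction. One must check that the root function is genuinely continuous at $0$ and holomorphic inside the sector, and that the \eqref{eq:R} star has empty interior. Both checks look routine.

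Consequences. If $E$ has at most one point, then every $u$ is constant on $E$, so $J_E=\cA_0$. If $f=g=0$ on $E$, then every element of $\cA_0$ takes the value $P(0,0)$ there, again giving $J_E=\cA_0$. The chain $B\subset\cA_S\subset\cA_0$ holds because $F,G$ are admissible monomials in $f,g$, and together with $B=J_E=\cA_0$ this gives \eqref{eq:fullcore}.

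Now suppose $\Gamma$ meets the axes only at $0$. Every nonconstant admissible monomial $f^ag^b$ has $a,b\ge1$ and so vanishes on $E$, because $h=0$ forces $f=0$ or $g=0$ (recall $\beta_1,\beta_2\ge1$). Hence $\cA_S\subset J_E=B\subset\cA_S$, which is \eqref{eq:collapse}. For the final equivalence, if $\cA_S=C(K)$ then $\cA_0=C(K)$, and $C(K)=J_E$ forces $E$ to have at most one point by Urysohn's lemma. The converse follows from \eqref{eq:collapse}.
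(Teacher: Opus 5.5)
Your proposal is correct and follows the paper's proof essentially step for step. You prove $B\subset J_E$ directly, recover $h$ from $h^L=F^{\beta_1}G^{\beta_2}$ by Mergelyan's theorem (a principal root on a sector under \eqref{eq:S}, an inverse of $t\mapsto t^L$ on a star of rays under \eqref{eq:R}), apply Lemma~\ref{lem:kallin} with $Q=h$ using $h^dP(f,g)\in B$, and handle the consequences with the same bookkeeping as the paper. Your explicit formula $\rho(t)=e^{i\theta_j}|t|^{1/L}$ and your observation that the $e^{i\theta_j}$ are distinct because the $e^{iL\theta_j}$ are simply spell out steps the paper states more briefly.
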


\begin{proof}
Let $B:=\overline{\C[F,G]}$. Since $F$ and $G$ are polynomials in $f,g$ that vanish on $E$, every element of $B$ belongs to $\cA_0(f,g;K)$ and is constant on $E$; thus $B\subset J_E$.

We first show that $h\in B$. The function $h^L=F^{\beta_1}G^{\beta_2}$ belongs to $B$. Under \eqref{eq:S}, $h^L(K)$ lies in the compact set $T:=\{0\}\cup\{t\ne0:|t|\le\|h\|^L_K,\ |\arg t|\le L\theta\}$, which has connected complement because $L\theta<\pi$. The principal branch $t\mapsto t^{1/L}$ is continuous on $T$ and holomorphic in its interior, so it is a uniform limit of polynomials on $T$ by Mergelyan's theorem, and it maps $h^L$ back to $h$ because $|\arg h|\le\theta<\pi/L$. Under \eqref{eq:R}, the map $t\mapsto t^L$ is injective on $h(K)$, since two points of distinct rays with equal $L$th powers would force $e^{iL\theta_j}=e^{iL\theta_k}$. Its inverse is therefore continuous on the compact set $h^L(K)$, which lies in a finite union of rays and hence satisfies $P(h^L(K))=C(h^L(K))$. In both cases $h\in B$.

We now apply Lemma~\ref{lem:kallin} with $Q=h$. Under \eqref{eq:S} we have $\opRe h\ge0$, since $\theta<\pi/L\le\pi/3$, so condition \textup{(b)} holds; under \eqref{eq:R} condition \textup{(a)} holds. Let $u\in J_E$ and $\varepsilon>0$, and choose a polynomial $P$ of degree $m$ with $\|u-P(f,g)\|_K<\varepsilon$. For $d\ge Nm$ every term satisfies $h^df^jg^k=h^{d-N(j+k)}F^jG^k$, so $h^dP(f,g)\in B$. The lemma, applied with $w=P(f,g)$, gives $u\in B$, which proves \eqref{eq:core}.

If $E$ has at most one point, every function is constant on $E$; if $f=g=0$ on $E$, every element of $\cA_0(f,g;K)$ takes on $E$ the value of the constant term of its approximants. In both cases $J_E=\cA_0(f,g;K)$, and \eqref{eq:fullcore} follows from $B\subset\cA_S(f,g;K)\subset\cA_0(f,g;K)$. If $\Gamma$ meets the axes only at the origin, every nonzero $(j,k)\in M$ has $j,k\ge1$; as $h=0$ forces $f=0$ or $g=0$, every nonconstant admissible monomial $f^jg^k$ vanishes on $E$, and $\cA_S(f,g;K)\subset J_E=B$. This proves \eqref{eq:collapse}. Finally, if $E$ contains two points, Urysohn's lemma provides a continuous function that is not constant on $E$, so $J_E\ne C(K)$.
\end{proof}

\begin{remark}\label{rem:meaning}
Lemma~\ref{lem:homog} explains the role of \eqref{eq:S} and \eqref{eq:R}. Off $E$, the pair $(F,G)$ identifies two points $x,y$ only if $(f,g)(y)=\omega(f,g)(x)$ with $\omega^L=1$, and then $h(y)=\omega^{\beta_1+\beta_2}h(x)$. Since $\gcd(\beta_1+\beta_2,L)=1$, the factor $\omega^{\beta_1+\beta_2}$ is a nontrivial $L$th root of unity unless $\omega=1$. Under \eqref{eq:S} two nonzero points of $h(K)$ cannot differ by such a factor, because their arguments differ by at most $2\theta<2\pi/L$, and under \eqref{eq:R} this would force two of the numbers $e^{iL\theta_j}$ to coincide. Theorem~\ref{thm:core} thus says that, once this arithmetic coincidence is excluded, the only information lost in passing from $(f,g)$ to $(F,G)$ is the collapse of the zero fibre $E$.
\end{remark}

\begin{corollary}\label{cor:quotient}
Under the hypotheses of Theorem~\ref{thm:core}, suppose that $\cA_0(f,g;K)=C(K)$. Then $\overline{\C[F,G]}$ is the algebra $C(K/E)$ of continuous functions constant on $E$, the map $(F,G)$ induces a homeomorphism of the quotient $K/E$ onto $X:=(F,G)(K)$, and $P(X)=C(X)$; in particular $X$ is polynomially convex. If $E$ is finite with $s\ge1$ points, then $\overline{\C[F,G]}$ has codimension $s-1$ in $C(K)$.
\end{corollary}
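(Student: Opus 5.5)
Under the hypothesis $\cA_0(f,g;K)=C(K)$, the set $J_E$ in \eqref{eq:core} is exactly the set of $u\in C(K)$ that are constant on $E$. So Theorem~\ref{thm:core} immediately gives
\[
\overline{\C[F,G]}=C(K/E).
\]
Here we identify $C(K/E)$ with the continuous functions on $K$ that are constant on $E$; the quotient $K/E$ is compact Hausdorff because $E$ is closed (if $E=\emptyset$ the quotient is just $K$). The plan is to read off every other assertion from this identification.

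\emph{Homeomorphism.} The map $\Phi:=(F,G)$ is continuous on $K$ and constant on $E$, since $F=G=0$ there. It therefore descends to a continuous surjection $\tilde\Phi:K/E\to X$. To get injectivity, take two distinct points of $K/E$. Since $K/E$ is compact Hausdorff, Urysohn's lemma gives a continuous function on $K/E$ separating them. That function lies in $\overline{\C[F,G]}$, so it is a uniform limit of polynomials in $F,G$, and hence some polynomial in $F,G$ also separates the two points. Consequently $\Phi$ takes different values at them. A continuous bijection from a compact space onto a Hausdorff space is a homeomorphism.

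\emph{Density and convexity.} As in Section~2, composition with $\Phi$ is an isometric isomorphism of $P(X)$ onto $\overline{\C[F,G]}=C(K/E)$. Transporting through $\tilde\Phi$, this says $P(X)=C(X)$. Polynomial convexity then follows exactly as at the end of the proof of Corollary~\ref{cor:kallinS}: the maximal ideal space of $P(X)=C(X)$ is $X$, and it coincides with $\widehat X$.

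\emph{Codimension.} Let $E=\{e_1,\dots,e_s\}$ with $s\ge1$. Then $C(K/E)$ is the common kernel of the $s-1$ functionals
\[
u\mapsto u(e_i)-u(e_1),\qquad i=2,\dots,s.
\]
These functionals are linearly independent: Urysohn functions taking the value $1$ at a single $e_i$ and $0$ at the other points of $E$ show this. Hence the kernel has codimension $s-1$ in $C(K)$.

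I expect no step to be a real obstacle. The one point needing care is the injectivity of $\tilde\Phi$, which depends on $K/E$ being Hausdorff so that Urysohn's lemma applies there.
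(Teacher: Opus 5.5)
Your proposal is correct and follows the paper's proof essentially step by step. You identify $\overline{\C[F,G]}$ with $C(K/E)$ via \eqref{eq:core}, use the separation of classes to obtain a continuous bijection $K/E\to X$ (hence a homeomorphism), pull back to get $P(X)=C(X)$, and count the $s-1$ independent conditions $u(e_i)=u(e_1)$. Your explicit appeal to Urysohn's lemma on the compact Hausdorff quotient only spells out the separation step that the paper states without comment.
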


\begin{proof}
By \eqref{eq:core} the closed algebra generated by $F,G$ is $C(K/E)$, which separates distinct classes of the quotient. Since its elements are uniform limits of polynomials in $F,G$, the pair $(F,G)$ itself separates distinct classes, and it is constant on $E$; hence it induces a continuous bijection from the compact space $K/E$ onto $X$, which is a homeomorphism. Pulling back by this homeomorphism identifies $P(X)$ with $\overline{\C[F,G]}=C(K/E)$, so $P(X)=C(X)$. If $E=\{x_1,\dots,x_s\}$, the conditions $u(x_i)=u(x_1)$, $2\le i\le s$, are independent linear conditions on $C(K)$.
\end{proof}

\begin{example}[Two admissible monomials on a disk]\label{ex:simple}
Let $S$ be any compact convex set with $a_S<1<b_S$, let $D_r:=\{|z|\le r\}$, $f(z)=z$, $g(z)=\bar z$ and $\beta=(1,1)$, and let $N\ge N_S(1,1)$. Here $h=|z|^2$ satisfies \eqref{eq:S} with $\theta=0$, $E=\{0\}$, and $\cA_0(z,\bar z;D_r)=C(D_r)$ by the Stone--Weierstrass theorem. Theorem~\ref{thm:core} gives
\[
 \overline{\C\bigl[z|z|^{2N},\ \bar z|z|^{2N}\bigr]}=\cA_S(z,\bar z;D_r)=C(D_r).
\]
For the Newton polygon of $1+Z^2W+ZW^2$ one may take $N=1$; if $a_S=0.99$ and $b_S=1.01$, Lemma~\ref{lem:weights} shows that every $N\ge101$ is admissible. A narrow cone around the diagonal thus forces a high weight $|z|^{2N}$, but never destroys density.
\end{example}

The sector condition is stable under continuous relative perturbations, which gives a transfer principle requiring no differentiability.

\begin{corollary}\label{cor:perturb}
Let $a,b,c,d\in\Nn$ with $p:=a-b>0$ and $q:=d-c>0$, put $d_0:=\gcd(p,q)$, suppose that $a_S<p/q<b_S$, and put $\beta:=(q/d_0,p/d_0)$, which lies in $\intr\Gamma$ by \eqref{eq:interior}. Let $f,g$ be continuous near $0$ with $f(0)=g(0)=0$ and
\begin{equation}\label{eq:relerr}
 f(z)=z^a\bar z^{\,b}\bigl(1+\varepsilon_f(z)\bigr),\qquad
 g(z)=z^c\bar z^{\,d}\bigl(1+\varepsilon_g(z)\bigr),\qquad
 \varepsilon_f(z),\varepsilon_g(z)\to0\ \ (z\to0).
\end{equation}
Then for every $N\ge N_S(\beta)$ there is $r_0=r_0(N)>0$ such that, for $0<r\le r_0$,
\[
 \cA_S(f,g;D_r)=\cA_0(f,g;D_r)=\overline{\C[h^Nf,\,h^Ng]}^{\,D_r},\qquad h=f^{q/d_0}g^{p/d_0}.
\]
\end{corollary}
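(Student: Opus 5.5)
The plan is to apply Theorem~\ref{thm:core} with $K=D_r$, $\beta=(q/d_0,p/d_0)$ and condition \eqref{eq:S}, and then use \eqref{eq:fullcore}. Put $\beta_1=q/d_0$, $\beta_2=p/d_0$. Since $\beta_2/\beta_1=p/q\in(a_S,b_S)$ and $\beta_1,\beta_2\ge1$, \eqref{eq:interior} gives $\beta\in\intr\Gamma$. Fix $N\ge N_S(\beta)$ and set $L=N(\beta_1+\beta_2)+1$.

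First compute $h=f^{\beta_1}g^{\beta_2}$. From \eqref{eq:relerr},
\[
 h(z)=z^{a\beta_1+c\beta_2}\bar z^{\,b\beta_1+d\beta_2}\bigl(1+\varepsilon_f\bigr)^{\beta_1}\bigl(1+\varepsilon_g\bigr)^{\beta_2}.
\]
The holomorphic and antiholomorphic exponents differ by $p\beta_1-q\beta_2=(pq-qp)/d_0=0$. Hence $z^{k}\bar z^{\,k}=|z|^{2k}$ with $k=a\beta_1+c\beta_2$, and so $h(z)=|z|^{2k}(1+\eta(z))$ with $\eta(z)\to0$ as $z\to0$.

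Next choose $\theta\in(0,\pi/L)$ and then $r_0>0$ so small that, for $0<|z|\le r_0$, the factor $1+\eta(z)$ lies in the sector $|\arg|\le\theta$. This only needs $|\eta|\le\sin\theta$, which is available because $\eta\to0$. The quantities $L$, and therefore $\theta$ and $r_0$, depend on $N$; this is why $r_0=r_0(N)$. The leading factor $|z|^{2k}$ is a nonnegative real number, so $h(D_r)\subset\{0\}\cup\{|\arg t|\le\theta\}$ for $r\le r_0$, which is \eqref{eq:S}. We can also shrink $r_0$ so that $f,g$ are continuous on $D_{r_0}$.

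Then identify the zero fibre $E=h^{-1}(0)$. For $0<|z|\le r_0$ (shrinking $r_0$ so that $|\varepsilon_f|,|\varepsilon_g|<1$) we have $h(z)\neq0$. So $E=\{0\}$ when $k>0$. If $k=0$, i.e.\ $a=c=0$, then $b=-p<0$, which is impossible since $b\in\Nn$ and $p>0$. Alternatively, $f(0)=g(0)=0$ gives $f=g=0$ on $E$ in any case. Either way the hypothesis of \eqref{eq:fullcore} holds, and Theorem~\ref{thm:core} yields
\[
 \overline{\C[h^Nf,h^Ng]}=\cA_S(f,g;D_r)=\cA_0(f,g;D_r).
\]

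The main obstacle is the sector estimate. The approach depends on the exact cancellation of the $\arg z$ dependence, which is where $\beta$ is chosen proportional to $(q,p)$. It also requires that the continuous error is uniformly small in argument on the whole punctured disk, which is what dictates the dependence of $r_0$ on $N$ through $L$. The remaining verifications are routine.
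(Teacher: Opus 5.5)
Your proposal is correct and follows the paper's proof. You use $p\beta_1-q\beta_2=0$ to remove the angular dependence of $h$, shrink the disc so that the factor $(1+\varepsilon_f)^{\beta_1}(1+\varepsilon_g)^{\beta_2}$ is nonzero with argument below $\pi/L$ (this gives \eqref{eq:S} and $E=\{0\}$), and then conclude from \eqref{eq:fullcore} in Theorem~\ref{thm:core}; your explicit bound $|\eta|\le\sin\theta$ and the continuity and zero-fibre checks only spell out what the paper states in one line.
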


\begin{proof}
Since $p\beta_1-q\beta_2=0$, the leading term of $h$ has no angular dependence:
\[
 h(z)=|z|^{\beta_1(a+b)+\beta_2(c+d)}\bigl(1+\varepsilon_f(z)\bigr)^{\beta_1}\bigl(1+\varepsilon_g(z)\bigr)^{\beta_2}\qquad(z\ne0).
\]
For small $r$ the last two factors are nonzero on $D_r\setminus\{0\}$ and their product has argument smaller than $\pi/L$ in modulus. Hence $E=\{0\}$ and \eqref{eq:S} holds on $D_r$, and \eqref{eq:fullcore} gives the claim.
\end{proof}

For exact mixed monomials the density question has a complete answer, which combines the arithmetic condition of Minsker \cite{minsker1976} with the position of the balance slope $p/q$ relative to the two extreme slopes of $S$. For the standard simplex and the pure monomials $f_0=z^p$, $g_0=\bar z^{\,q}$, it reduces to Minsker's theorem.

\begin{theorem}\label{thm:sharp}
Let $f_0(z)=z^a\bar z^{\,b}$ and $g_0(z)=z^c\bar z^{\,d}$ with $p:=a-b>0$ and $q:=d-c>0$. For every $r>0$,
\[
 \cA_S(f_0,g_0;D_r)=C(D_r)\quad\text{if and only if}\quad\gcd(p,q)=1\ \text{ and }\ a_S<\frac pq<b_S .
\]
When these conditions hold, two explicit admissible monomials already generate $C(D_r)$.
\end{theorem}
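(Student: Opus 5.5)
Sufficiency follows from Corollary~\ref{cor:perturb} with $\varepsilon_f=\varepsilon_g=0$, except that we need every $r>0$, not only small $r$. Put $\beta:=(q,p)$; since $\gcd(p,q)=1$ this is the normalized vector of Corollary~\ref{cor:perturb}, and it lies in $\intr\Gamma$ because $a_S<p/q<b_S$. Then
\[
 h=f_0^{q}g_0^{p}=z^{qa+pc}\bar z^{\,qb+pd}=|z|^{2m},\qquad m:=qb+pc+pq ,
\]
because $qa-qb=qp=pd-pc$. Thus $h\ge0$ on all of $D_r$, condition \eqref{eq:S} holds with $\theta=0$ for every $r$, and $E=\{0\}$. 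By \eqref{eq:fullcore} the problem reduces to showing $\cA_0(f_0,g_0;D_r)=C(D_r)$, which is Minsker's theorem for mixed monomials. To derive it: $f_0^{q}g_0^{p}=|z|^{2m}$ recovers $|z|$, so on each circle $|z|=\rho$ the generators become $\rho^{a+b}e^{ip\phi}$ and $\rho^{c+d}e^{-iq\phi}$. Since $\gcd(p,q)=1$, choose integers $\lambda,\mu$ with $\lambda p-\mu q=1$; then $F^{\lambda}G^{\mu}$ (possibly with negative exponents, suitably handled) would give $e^{i\phi}$.

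I would not manipulate negative powers directly. Instead, apply Lemma~\ref{lem:kallin}(b) with $Q=h$ inside $B=\cA_0(f_0,g_0;D_r)$. Off $0$, the functions $h^{d}z$ and $h^d\bar z$ become polynomials in $f_0,g_0$ for large $d$: by Bézout, $z\,|z|^{2k}$ equals a product $f_0^{j}g_0^{l}$ when $jp-lq=1$ and the radial exponents match, and the high power of $h$ absorbs any deficit in $|z|$. Then $h^dP(z,\bar z)\in B$ for every polynomial $P$, and Stone--Weierstrass together with the lemma gives $B=C(D_r)$. The explicit generators are $h^Nf_0$ and $h^Ng_0$ with $N\ge N_S(\beta)$.

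Necessity has two parts. If $\gcd(p,q)=k>1$, then $f_0(\omega z)=f_0(z)$ and $g_0(\omega z)=g_0(z)$ for $\omega=e^{2\pi i/k}$. Indeed, $f_0(\omega z)=\omega^{a-b}f_0(z)=\omega^pf_0(z)$, and similarly for $g_0$ with the factor $\omega^{-q}$. So every element of $\cA_S$ is $\omega$-invariant, and density fails. If $p/q\notin(a_S,b_S)$, say $p/q\le a_S$, then every admissible exponent $(j,l)\in M$ has $l\ge a_Sj\ge jp/q$. Hence the angular frequency $jp-lq$ of $f_0^jg_0^l$ is $\le0$, and every element of $\cA_S$ has only nonpositive Fourier modes on each circle, so $z$ cannot be approximated. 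Concretely, integrating $P(f_0,g_0)\,\bar z^{\,0}e^{-i\phi}$ over a circle gives $0$, while for $z$ it gives $\rho\ne0$. If $p/q\ge b_S$, the symmetric argument shows that only nonnegative modes occur. The boundary cases $p/q=a_S$ or $b_S$ are covered by the non-strict inequalities, so no question of boundary-ray membership arises.

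The main obstacle is the sufficiency step on the full disk: checking carefully that $h^d\bar z$ and $h^dz$ are genuine nonnegative-exponent monomials in $f_0,g_0$. We need $j,l\ge0$ with $jp-lq=\pm1$, which is always possible for coprime $p,q$. We also need the radial degree of $f_0^jg_0^l$ to be at most that of $z h^d$, which holds once $d$ is large. The remainder is the earlier machinery.
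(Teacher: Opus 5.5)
Your proposal is correct in outline, but it argues sufficiency by a different route from the paper, and one step needs a one-line repair.

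\textbf{Necessity.} This agrees with the paper in substance. Your rotation $z\mapsto e^{2\pi i/k}z$ is the same obstruction as the paper's vanishing first Fourier coefficient. Your slope inequality ($l\ge a_Sj\ge jp/q$ on $\Gamma$ when $p/q\le a_S$, and symmetrically when $p/q\ge b_S$) is a more direct version of the paper's open-cone argument, and it handles the boundary cases cleanly.

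\textbf{Sufficiency.} Here the routes genuinely differ. The paper never invokes Theorem~\ref{thm:core} or Lemma~\ref{lem:kallin} in this proof. It applies B\'ezout inside the restricted semigroup: adding large multiples of $(q,p)\in\intr\Gamma$ gives $\nu^\pm\in M$ with $p\nu^\pm_1-q\nu^\pm_2=\pm1$. Hence $z|z|^{2s}$ and $\bar z|z|^{2t}$ are themselves admissible, and the radial weights are removed by the Weierstrass theorem applied to $H_0=F_0G_0$ on an interval.

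You instead use Theorem~\ref{thm:core} with $\theta=0$, which is valid for every $r$ because $h=|z|^{2m}\ge0$, to identify $\cA_S$ with $\cA_0$. You then reprove Minsker's theorem via Lemma~\ref{lem:kallin}(b). The paper's route is elementary (no Mergelyan, no weighted lemma) and produces the B\'ezout pair. Yours reuses the machinery and shows that the whole family of weighted pairs $h^Nf_0,h^Ng_0$, $N\ge N_S(\beta)$, generates $C(D_r)$ on every disc.

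\textbf{The step to repair.} The claim you single out as the main obstacle is false in general: $h^dz$ need not be a polynomial in $f_0,g_0$ for any $d$. For $f_0=z^2$, $g_0=\bar z^{\,3}$ one has $h=|z|^{12}$ and $h^dz=z^{6d+1}\bar z^{\,6d}$. Its $z$-degree is odd, whereas every monomial in $z^2,\bar z^{\,3}$ has even $z$-degree. In general, the monomials $f_0^jg_0^l$ of angular frequency $1$ are $z|z|^{2s}$ with $s$ confined to a single residue class modulo $m$, and that class need not be $0$.

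What Lemma~\ref{lem:kallin} actually requires is only $h^dz\in B$, and this does hold. Since $h$ is real with $h(D_r)=[0,r^{2m}]$, the Weierstrass theorem gives $\phi(h)\in B$ for every $\phi\in C([0,r^{2m}])$. In particular $h^\gamma\in B$ for all real $\gamma>0$. Then $h^dz=f_0^jg_0^l\,h^{\,d-s/m}\in B$ once $md\ge s$, and likewise for $h^d\bar z$. This is the same device the paper uses. With it, your argument is complete.
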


\begin{proof}
By \eqref{eq:interior} the condition on slopes means $(q,p)\in\intr\Gamma$ (when $a_S=b_S$ it fails, and $\intr\Gamma=\emptyset$). On the circle $|z|=\rho>0$ the function $f_0^jg_0^k$ is a constant multiple of $e^{i(pj-qk)\phi}$, $z=\rho e^{i\phi}$. If $d_0:=\gcd(p,q)>1$, all these frequencies are multiples of $d_0$, so the Fourier coefficient of index $1$ of every element of $\cA_S$ vanishes on the circle and $z$ cannot be approximated. Suppose $(q,p)\notin\intr\Gamma$. If $\Gamma$ contained vectors $v,w$ with $\ell(v)>0>\ell(w)$, where $\ell(j,k):=pj-qk$, then the open cone spanned by $v$ and $w$ would be an open subset of $\Gamma$ meeting the kernel of $\ell$ in the quadrant, that is, the ray through $(q,p)$; thus $\ell$ has a constant weak sign on $\Gamma$, and either $z$ or $\bar z$ has a Fourier coefficient that all elements of $\cA_S$ lack. This proves necessity.

Conversely, let $\gcd(p,q)=1$ and $(q,p)\in\intr\Gamma$. By B\'ezout's identity there are integer vectors with $p\nu_1-q\nu_2=\pm1$, and adding large multiples of $(q,p)$ we obtain $\nu^+,\nu^-\in M$ with $p\nu^+_1-q\nu^+_2=1$ and $p\nu^-_1-q\nu^-_2=-1$. Then $F_0:=f_0^{\nu^+_1}g_0^{\nu^+_2}=z|z|^{2s}$ and $G_0:=f_0^{\nu^-_1}g_0^{\nu^-_2}=\bar z|z|^{2t}$ for some integers $s,t\ge0$, and $H_0:=F_0G_0=|z|^{2(s+t+1)}$. By the Weierstrass theorem on the interval $H_0(D_r)$, the functions
\[
 F_0\,(H_0+\eta)^{-s/(s+t+1)},\qquad G_0\,(H_0+\eta)^{-t/(s+t+1)}
\]
belong to $\overline{\C[F_0,G_0]}$; they are bounded by $|z|$ and converge to $z$ and $\bar z$ uniformly away from $0$, hence uniformly on $D_r$ as $\eta\downarrow0$. The Stone--Weierstrass theorem gives $\overline{\C[F_0,G_0]}=C(D_r)$.
\end{proof}

\begin{example}[Two independent obstructions]\label{ex:obstruct}
For $f_0=z$, $g_0=\bar z$ and any $S$ with $b_S=1$, for instance the triangle $\operatorname{conv}\{0,(1,0),(1,1)\}$, the unrestricted algebra is $C(D_r)$, but every admissible exponent satisfies $j-k\ge0$, so no admissible polynomial approximates $\bar z$ on $|z|=r$; here the balance slope $1$ equals $b_S$. For $f_0=z^2$, $g_0=\bar z^2$ and any $S$ with $a_S<1<b_S$, the balance slope lies strictly between the extreme slopes, but both generators identify $z$ and $-z$. Thus the two conditions of Theorem~\ref{thm:sharp} are independent.
\end{example}

\section{Examples}\label{sec:examples}

We now illustrate each mechanism of the previous sections. Except in the first example, $S$ is an arbitrary compact convex set subject only to conditions on $a_S$ and $b_S$, and whenever the diagonal direction is involved we write $N_S:=N_S(1,1)$ for the least admissible weight, which is controlled by \eqref{eq:NS}.

\begin{example}[An irrational cone and a perturbation]\label{ex:irrational}
Let $\Gamma=\{(x,y)\in\R_+^2:\tfrac{\sqrt2}{3}x\le y\le\tfrac{\sqrt5}{2}x\}$ and $S=\Gamma\cap\{x+y\le1\}$. The semigroup $M$ is not finitely generated: otherwise the cone spanned by $M$ would be closed and rational polyhedral, and since it contains every rational direction of the closed cone $\Gamma$ it would coincide with $\Gamma$, whose boundary slopes are irrational. Let
\[
 f(z)=z^2,\qquad g(z)=\bar z^3+\lambda z^4,\qquad\lambda\in\C .
\]
Then $p=2$, $q=3$, and $\beta=(3,2)$ has slope $2/3\in(a_S,b_S)=(\sqrt2/3,\sqrt5/2)$. The two quotients in \eqref{eq:NS} are approximately $0.80$ and $0.74$, so $N=1$ is admissible, giving the exponent vectors $(4,2)$ and $(3,3)$. Since $g-\lambda f^2=\bar z^3$, the unrestricted algebra contains $z^2$ and $\bar z^3$, hence also
\[
 H_0=(z^2)^3(\bar z^3)^2=|z|^{12},\qquad F_0=(z^2)^2\bar z^3=z|z|^6,\qquad G_0=z^2\bar z^3=\bar z|z|^4,
\]
and the argument of Theorem~\ref{thm:sharp} with the exponents $1/2$ and $1/3$ shows that $\cA_0(f,g;D_r)=C(D_r)$. The relative error of $g$ has modulus $|\lambda||z|$, so Corollary~\ref{cor:perturb} gives, for all sufficiently small $r$,
\[
 \cA_S(f,g;D_r)=\overline{\C[f^4g^2,\ f^3g^3]}=C(D_r).
\]
Only the radius depends on $\lambda$.
\end{example}

\begin{example}[A zero fibre with several points]\label{ex:collapse}
Let $S$ satisfy $0<a_S<1<b_S<\infty$, let $N\ge N_S$, $0<r<1$, $k\ge1$, and let $K$ be the disjoint union of $D_r$ and $k$ isolated points $x_1,\dots,x_k$. Put $f(z)=z$ and $g(z)=\bar z$ on $D_r$, and $f(x_i)=0$, $g(x_i)=i$. The compact set $g(K)=D_r\cup\{1,\dots,k\}$ has connected complement, and a function equal to $1$ near $i$ and to $0$ near the other components is holomorphic near $g(K)$; by Runge's theorem the indicator of $\{x_i\}$ belongs to $\cA_0(f,g;K)$. Multiplying $f$ and $g$ by the indicator of $D_r$ we obtain $z$ and $\bar z$ on the disk, and the Stone--Weierstrass theorem gives $\cA_0(f,g;K)=C(K)$. On the other hand $h=fg$ equals $|z|^2$ on $D_r$ and $0$ at each $x_i$, so \eqref{eq:S} holds and $E=\{0,x_1,\dots,x_k\}$. Since $\Gamma$ meets the axes only at the origin, Theorem~\ref{thm:core} and Corollary~\ref{cor:quotient} give
\[
 \cA_S(f,g;K)=\overline{\C[f^{N+1}g^N,\ f^Ng^{N+1}]}=\{u\in C(K):u(0)=u(x_1)=\dots=u(x_k)\},
\]
a subalgebra of codimension $k$, although the unrestricted algebra is all of $C(K)$. The condition excluding the coordinate axes cannot be omitted. Suppose instead that $a_S<1$ and $(0,1)\in\Gamma$, so that $b_S=\infty$. Then $g$ itself is admissible, and $g$ takes the distinct values $0,1,\dots,k$ on $E$. By Lagrange interpolation, every $u\in C(K)$ differs from some polynomial in $g$ by a function vanishing on $E$. That function belongs to $\overline{\C[f^{N+1}g^N,f^Ng^{N+1}]}$ by \eqref{eq:core}, since $\cA_0(f,g;K)=C(K)$. Hence $\cA_S(f,g;K)=C(K)$.
\end{example}

\begin{example}[A connected compact set]\label{ex:connected}
The phenomenon does not require isolated points. Let $S$ satisfy $0<a_S<1<b_S<\infty$, let $N\ge N_S$, and let $K=[0,1]$, $f(x)=x(x-1)$, $g(x)=x$. Then $\cA_0(f,g;K)=C([0,1])$ by the Weierstrass theorem, while $h=fg=x^2(x-1)\le0$ satisfies \eqref{eq:R} with a single ray and $E=\{0,1\}$. Theorem~\ref{thm:core} gives
\[
 \cA_S(f,g;[0,1])=\overline{\C[f^{N+1}g^N,\ f^Ng^{N+1}]}=\{u\in C([0,1]):u(0)=u(1)\}.
\]
Accordingly the map $x\mapsto(F,G)(x)=\bigl(x^{2N+1}(x-1)^{N+1},\ x^{2N+1}(x-1)^N\bigr)$ identifies the endpoints and is injective on $(0,1)$, since $F/G=x-1$ there; its image is a simple closed curve in $\R^2\subset\C^2$, on which polynomials are dense in the continuous functions, in agreement with Corollary~\ref{cor:quotient}.
\end{example}

The next result is a genuine application of Kallin's lemma, both in its classical and in its restricted form, and it shows that the choice of the two admissible monomials is governed by arithmetic.

\begin{theorem}\label{thm:stars}
Let $m\ge2$, $\omega:=e^{2\pi i/m}$, $r>0$, and
\[
 X^{(m)}:=\bigcup_{j=0}^{m-1}X_j,\qquad X_j:=\{(z,\omega^j\bar z):|z|\le r\}.
\]
Let $S$ satisfy $a_S<1<b_S$.
\begin{itemize}
\item[\textup{(i)}] $\cP^S(X^{(m)})=C(X^{(m)})$.
\item[\textup{(ii)}] If $N\ge N_S$, $F_N:=Z^{N+1}W^N$ and $G_N:=Z^NW^{N+1}$, then
\[
 \overline{\C[F_N,G_N]}=C(X^{(m)})\quad\text{if and only if}\quad\gcd(2N+1,m)=1 .
\]
\end{itemize}
\end{theorem}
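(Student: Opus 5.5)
The plan is to realize $X^{(m)}$ as the image of a compact space under a pair $(f,g)$. Then part (ii) becomes a direct application of Theorem~\ref{thm:core} under the ray condition \eqref{eq:R}, and part (i) becomes an application of Corollary~\ref{cor:kallinS} with $Q=ZW$.

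For (i), set $Q:=ZW$. Its exponent $(1,1)$ has slope $1\in(a_S,b_S)$, so it lies in $\intr\Gamma$ by \eqref{eq:interior}, and hence $Q\in\cP^S$. On $X_j$ we have $Q=\omega^j|z|^2$, so $Q(X_j)\subset\omega^j[0,\infty)$, and these $m$ rays are pairwise distinct. The zero fibre $E=X^{(m)}\cap Q^{-1}(0)$ is the single point $(0,0)$. It remains to check that $\cP^S(X_j)=C(X_j)$. The map $z\mapsto(z,\omega^j\bar z)$ is a homeomorphism $D_r\to X_j$, and under it $Z^kW^l$ pulls back to $\omega^{jl}z^k\bar z^{\,l}$. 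Hence $\cP^S(X_j)$ is isometric to $\cA_S(z,\omega^j\bar z;D_r)$. Since the constant $\omega^{jl}$ is harmless, Example~\ref{ex:simple} (that is, Theorem~\ref{thm:core} with $h=\omega^j|z|^2$ rotated) gives density. More directly, the admissible monomials $F_N,G_N$ restrict to $\omega^{jN}z|z|^{2N}$ and $\omega^{j(N+1)}\bar z|z|^{2N}$, and the argument of Theorem~\ref{thm:sharp} with $s=t=N$ yields $C(D_r)$. Corollary~\ref{cor:kallinS} then gives $\cP^S(X^{(m)})=C(X^{(m)})$.

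For the sufficiency in (ii), let $K$ be the wedge of $m$ copies of $D_r$ glued at $0$, and on the $j$th copy set $f=z$, $g=\omega^j\bar z$. Then $(f,g)$ is a homeomorphism $K\to X^{(m)}$. With $\beta=(1,1)$ we get $h=fg=\omega^j|z|^2$, $E=\{0\}$ and $L=2N+1$, and $F=h^Nf$, $G=h^Ng$ correspond to $F_N,G_N$ by Lemma~\ref{lem:homog}. The ray condition \eqref{eq:R} holds with $\theta_j=2\pi j/m$ exactly when the numbers $\omega^{jL}$, $0\le j<m$, are distinct, that is, when $\gcd(L,m)=1$. In that case \eqref{eq:fullcore} gives $\overline{\C[F_N,G_N]}=\cA_0(f,g;K)$. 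This equals $C(K)$: it contains $\cA_S(f,g;K)\cong\cP^S(X^{(m)})$ by \eqref{eq:fullcore}, and that is $C$ by part (i).

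For necessity, suppose $d:=\gcd(2N+1,m)>1$, and put $\zeta:=e^{2\pi i/d}$. Then $\zeta^L=1$ and $\zeta^m=1$, so $\zeta=\omega^{m/d}$. Take $p=(z,\omega^j\bar z)\in X_j$ with $z\ne0$. The point $\zeta p$ has first coordinate $\zeta z$ and second coordinate $\zeta\omega^j\bar z=\omega^{j+2m/d}\,\overline{\zeta z}$, so $\zeta p\in X_k$ with $k\equiv j+2m/d\pmod m$. Since $\zeta\ne1$ and $z\ne0$, the points $p$ and $\zeta p$ are distinct. By Lemma~\ref{lem:homog}, $F_N$ and $G_N$ take the same values at $p$ and $\zeta p$, because both are homogeneous of degree $L$ and $\zeta^L=1$. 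So $\overline{\C[F_N,G_N]}$ fails to separate points and cannot equal $C(X^{(m)})$.

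The step I expect to need the most care is the verification of \eqref{eq:R}: one must make sure the $m$ rays really are the $\omega^j$ rays, and that distinctness of the $\omega^{jL}$ is exactly $\gcd(L,m)=1$. The other step to double-check is the identification of $\zeta p$ as a point of $X^{(m)}$ in the necessity argument; everything else is bookkeeping.
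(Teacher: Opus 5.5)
Your proof is correct and follows the paper's argument essentially step for step: Corollary~\ref{cor:kallinS} with $Q=ZW$ for (i), Theorem~\ref{thm:core} under the ray condition \eqref{eq:R} for sufficiency in (ii), and the degree-$L$ homogeneity of $F_N,G_N$ under a primitive $\gcd(2N+1,m)$-th root of unity for necessity. The differences are cosmetic. You parametrize $X^{(m)}$ by a wedge of discs instead of taking $K=X^{(m)}$ with the coordinate functions. You also use (i) for the given $S$ rather than for the standard simplex to get $\cA_0=C$.
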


\begin{proof}
(i) The polynomial $Q=ZW$ belongs to $\cP^S$, maps $X_j$ onto the ray segment $\omega^j[0,r^2]$, and vanishes on $X^{(m)}$ only at the origin. The homeomorphism $z\mapsto(z,\omega^j\bar z)$ of $D_r$ onto $X_j$ pulls $Z^aW^b$ back to $\omega^{jb}z^a\bar z^{\,b}$, so $\cP^S(X_j)$ corresponds to $\cA_S(z,\bar z;D_r)$, which equals $C(D_r)$ by Theorem~\ref{thm:sharp} with $p=q=1$. Corollary~\ref{cor:kallinS} gives the claim.

(ii) Here $\beta=(1,1)\in\intr\Gamma$, $h=ZW$ and $L=2N+1$. If $\gcd(L,m)=1$, the numbers $\omega^{jL}$, $0\le j<m$, are distinct, so \eqref{eq:R} holds for $h$ on $X^{(m)}$, with $E=\{(0,0)\}$. Moreover $P(X^{(m)})=C(X^{(m)})$ by (i) applied to the standard simplex. Theorem~\ref{thm:core}, applied to the coordinate functions on $K=X^{(m)}$, gives density. If $\delta:=\gcd(L,m)>1$, put $\eta:=e^{2\pi i/\delta}$ and take $x=(z,\bar z)\in X_0$ with $z\ne0$. Then $\eta x=(\eta z,\eta^2\,\overline{\eta z})$, and $\eta^2$ is an $m$th root of unity because $\delta\mid m$, so $\eta x\in X^{(m)}$; moreover $\eta x\ne x$, while $\eta^L=1$ because $\delta\mid L$. By Lemma~\ref{lem:homog} the functions $F_N,G_N$ do not separate $x$ and $\eta x$.
\end{proof}

The star also gives an explicit hull computation. Suppose in addition that $\Gamma$ meets the coordinate axes only at $0$. By Proposition~\ref{prop:saturation} and part \textup{(i)},
\begin{equation}\label{eq:starHull}
 \widehat{X^{(m)}}^{\,S}=X^{(m)}\cup H,
 \qquad \widehat{X^{(m)}}^{\,S}_{\Omega_*}=X^{(m)}.
\end{equation}
The relative convexity assertion also follows directly from Lemma~\ref{lem:kallinHull}, applied repeatedly. Each $X_j$ is $S$-polynomially convex in $\Omega_*$ by the one-disc density result and Proposition~\ref{prop:saturation}. The polynomial $Q=ZW$ maps each piece onto a distinct ray segment; a union of these segments is polynomially convex and meets the next segment only at $0$. The zero fibre is $\{(0,0)\}$, which is $S$-polynomially convex in $\Omega_*$ because its ambient hull is $H$. This verifies every hypothesis of the union lemma at each step.

Thus, for a fixed star, the set of good weights $N\ge N_S$ is determined by the residue of $N$ modulo $m$, and it is never empty, since every multiple of $m$ is good. For $m=2$ the set $X^{(2)}$ is the union of the totally real discs $\{(z,\bar z)\}$ and $\{(z,-\bar z)\}$; since $2N+1$ is odd, every admissible pair is dense, although $h=ZW$ takes both signs and the sector condition fails. For $m=3$ the weight $N$ is good if and only if $N\not\equiv1\pmod 3$. When $N_S=1$, as for the Newton polygon of $1+Z^2W+ZW^2$, the first natural choice therefore fails: on $X_j$, with $w=\omega^jz$, one finds $Z^2W=|w|^2w$ and $ZW^2=|w|^2\bar w$ independently of $j$, whereas the weight $N=2$ gives
\[
 \overline{\C[Z^3W^2,\ Z^2W^3]}=\cP^S(X^{(3)})=C(X^{(3)}).
\]

The transfer principle also shows that the restricted problem is no easier than the classical one when the leading exponents have a common divisor. The two ordinary approximation results used below are quoted, not reproved.

\begin{theorem}\label{thm:symmetry}
Let $S$ satisfy $a_S<1<b_S$, let $N\ge N_S$, $f(z)=z^2$, and
\[
 g_+(z):=\bar z^2+z^3,\qquad g_-(z):=\bar z^2+\bar z^3 .
\]
For all sufficiently small $r>0$,
\begin{align}
 \cA_S(f,g_+;D_r)&=\overline{\C[f^{N+1}g_+^N,\ f^Ng_+^{N+1}]}=C(D_r),\label{eq:positive}\\
 \cA_S(f,g_-;D_r)&=\overline{\C[f^{N+1}g_-^N,\ f^Ng_-^{N+1}]}\ne C(D_r).\label{eq:negative}
\end{align}
\end{theorem}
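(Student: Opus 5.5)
Both identities \eqref{eq:positive} and \eqref{eq:negative} come from one application of Theorem~\ref{thm:core}, with $\beta=(1,1)$, $h=fg_\pm$, $L=2N+1$. The sector condition \eqref{eq:S} follows from a leading-term computation. The density statements are then transferred from the two quoted unrestricted results: $\cA_0(z^2,\bar z^2+z^3;D_r)=C(D_r)$ \cite{depaepe1996}, and $(z^2,\bar z^2+\bar z^3)(D_r)$ has a nontrivial polynomial hull \cite{os2002}, so $\cA_0(f,g_-;D_r)\ne C(D_r)$.

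The first step is to check \eqref{eq:S} for small $r$. We have
\[
 h_\pm(z)=z^2\bar z^2+z^2(\text{cubic term})=|z|^4\bigl(1+\varepsilon_\pm(z)\bigr),
\]
with $\varepsilon_+=z^3/\bar z^2$ and $\varepsilon_-=\bar z$. In both cases $|\varepsilon_\pm(z)|=|z|$. Thus $f=z^2(1+0)$ and $g_\pm=\bar z^2(1+\varepsilon_g)$ with $\varepsilon_g\to0$, which is exactly the situation of Corollary~\ref{cor:perturb} with $a=2$, $b=0$, $c=0$, $d=2$. However, $p=q=2$, so $d_0=2$ and that corollary would use $\beta=(1,1)$, $h=fg$. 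This is the same setup as here, and the corollary's proof did not use coprimality. The conclusion is that for $r<r_0(N)$ the argument of $h_\pm$ on $D_r\setminus\{0\}$ is less than $\pi/L$ in modulus, $E=\{0\}$, and \eqref{eq:fullcore} gives
\[
 \overline{\C[f^{N+1}g_\pm^N,\ f^Ng_\pm^{N+1}]}=\cA_S(f,g_\pm;D_r)=\cA_0(f,g_\pm;D_r).
\]
The admissibility of $N$ (so that $F=h^Nf$, $G=h^Ng$ are the admissible monomials $(N+1,N)$ and $(N,N+1)$) is exactly $N\ge N_S$ by Lemma~\ref{lem:weights}.

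It then remains to identify $\cA_0$. For $g_+$, de Paepe's theorem gives $\cA_0(f,g_+;D_r)=C(D_r)$ for small $r$, which yields \eqref{eq:positive}. For $g_-$, the isometric isomorphism $\cA_0(f,g_-;D_r)\cong P(X)$ with $X=(f,g_-)(D_r)$ applies. If $\cA_0$ were $C(D_r)$, then $P(X)$ would equal $C(X)$ and $X$ would be polynomially convex, contradicting the nontrivial hull from \cite{os2002}. This yields \eqref{eq:negative}.

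The main obstacle is bookkeeping of radii: the quoted results hold for sufficiently small discs, and the sector condition holds for $r\le r_0(N)$. One therefore takes $r$ below the minimum of these thresholds. One must also check that the hull statement of \cite{os2002} persists on every smaller disc; otherwise one argues directly that $z^2$ and $\bar z^2+\bar z^3$ fail to separate enough, or that $P(X)\ne C(X)$ for each small $r$. If the cited result is stated only for one radius, I would invoke its local formulation, namely that the hull is nontrivial for every small $r$.
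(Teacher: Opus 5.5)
Your proposal is correct and follows essentially the paper's proof: the paper checks \eqref{eq:S} directly for $h_+=|z|^4(1+z^5|z|^{-4})$ and $h_-=|z|^4(1+\bar z)$ and applies Theorem~\ref{thm:core}. Your appeal to Corollary~\ref{cor:perturb} packages exactly this check; that corollary does not require coprimality and applies here with $d_0=2$ and $\beta=(1,1)$. You then transfer the quoted results of de Paepe and O'Farrell--Sanabria-Garc\'ia just as the paper does. Your direct step from $\cA_0(f,g_-;D_r)=C(D_r)$ to $P(X)=C(X)$ is also sound, since composition with the surjection $(f,g_-)\colon D_r\to X$ is injective on $C(X)$, so you do not need the paper's detour through injectivity of $(f,g_-)$.
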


\begin{proof}
De Paepe proved that $\overline{\C[z^2,\bar z^2+z^3]}=C(D_r)$ for small $r$ \cite{depaepe1996}; see also \cite[Example~3.5]{dpw2007}. O'Farrell and Sanabria-Garc\'ia proved that the disc $X_r=\{(z^2,\bar z^2+\bar z^3):|z|\le r\}$ has a nontrivial polynomial hull for small $r$ \cite{os2002}; since $\cA_0(z^2,g_-;D_r)$ is isometric to $P(X_r)$, equality with $C(D_r)$ would make $(z^2,g_-)$ separate the points of $D_r$, hence a homeomorphism of $D_r$ onto $X_r$, and would therefore force $P(X_r)=C(X_r)$ and polynomial convexity of $X_r$, so $\cA_0(z^2,g_-;D_r)\ne C(D_r)$. In both cases the balance vector is $\beta=(1,1)\in\intr\Gamma$, and $L=2N+1$. The anchors
\[
 h_+=fg_+=|z|^4\bigl(1+z^5|z|^{-4}\bigr),\qquad h_-=fg_-=|z|^4(1+\bar z)
\]
vanish only at $0$ on small disks, and their arguments tend to $0$ uniformly as $z\to0$; thus \eqref{eq:S} holds with $\theta<\pi/(2N+1)$ on small disks. Theorem~\ref{thm:core} identifies each restricted algebra with the corresponding unrestricted one, which gives \eqref{eq:positive} and \eqref{eq:negative}.
\end{proof}

Corollary~\ref{cor:perturb} transfers density but does not create it, and the following example shows that relative smallness of a perturbation alone does not preserve density, even for $z$ and $\bar z$.

\begin{example}[Relative smallness without density]\label{ex:noninjective}
For $j\ge3$ let $r_j=2^{-j}$, $s_j=r_j+r_j^2$ and $\rho_j=r_j^2/4$, and choose continuous functions $0\le\chi_j\le1$ supported in the disks $\{|z-s_j|\le\rho_j\}$ with $\chi_j(s_j)=1$; these disks are pairwise disjoint and do not contain any of the points $r_k$. Put
\[
 f(z):=z-\sum_{j\ge3}r_j^2\chi_j(z),\qquad g(z):=\bar z-\sum_{j\ge3}r_j^2\chi_j(z).
\]
The sums are locally finite off the origin, $f$ and $g$ extend continuously by $f(0)=g(0)=0$, and since $|z|\asymp r_j$ on the support of $\chi_j$, they satisfy \eqref{eq:relerr} with $(a,b,c,d)=(1,0,0,1)$. Nevertheless $f(s_j)=f(r_j)=r_j$ and $g(s_j)=g(r_j)=r_j$, so $f,g$ fail to separate points in every disk centred at $0$. For every $S$ with $a_S<1<b_S$, Corollary~\ref{cor:perturb} gives $\cA_S(f,g;D_r)=\cA_0(f,g;D_r)$ for small $r$, but neither algebra equals $C(D_r)$.
\end{example}

\begin{remark}
Under the hypotheses of Corollary~\ref{cor:perturb}, the restricted local problem reduces to the classical one for the same pair. Theorem~\ref{thm:symmetry} shows that the cone and the leading monomials alone do not determine density. Further conditions on the higher order terms, beyond the classes in \cite{dieudepaepe2002,dieuchi2006,dpw2007}, would transfer to the restricted setting by that corollary.
\end{remark}
\par
{\bf Acknowledgments.} We would to thank to Professor Nguyen Quang Dieu for his  suggestions the problem and to completely written the manuscripts.
\end{document}